\theoremstyle{definition}
\newcommand{\Aut}    {\mathrm{Aut}}
\newcommand{\Ker}   {\mathrm{Ker}}
\newcommand{\id}   {\mathrm{id}}
\newcommand{\M}    {\mathcal{M}}
\newcommand{\HH}    {\mathcal{H}}
\newtheorem{thm}{Theorem}
\newcommand{\pd}    {\partial\Delta}
\newtheorem{lem}[thm]   {Lemma}
\newtheorem{cor}[thm]   {Corollary}
\newtheorem{rem}[thm]   {Remark}
\newtheorem{defn}[thm]  {Definition}
\newtheorem{ex}[thm]    {Example}
\newtheorem{prop}[thm]  {Proposition}
\newcounter{foo}  \Alph{foo}
\newenvironment{example}
{\medskip\par\noindent{\sc Example}\ }
{\par}
\begin{document}
\tikzset{->-/.style={decoration={
  markings,
  mark=at position .5 with {{->}}},postaction={decorate}}}

\title{On the homotopy and strong homotopy type of complexes of discrete Morse functions}
\author{Connor Donovan}
\author{Maxwell Lin}
\author{Nicholas A. Scoville}

\address{Department of Mathematics and Computer Science, Ursinus College, Collegeville PA 19426}
\email{codonovan@ursinus.edu}

\address{Department of Mathematics, Berkeley, CA 94720}

\email{mxlin@berkeley.edu}

\address{Department of Mathematics and Computer Science, Ursinus College, Collegeville PA 19426}

\email{nscoville@ursinus.edu}

\date{\today}
\keywords{Discrete Morse theory, Morse complex, dominated vertex, strong collapsibility, homotopy}
\subjclass[2020]{ (Primary) 57Q70, 55U05;  (Secondary) 57Q05, 08A35}

\begin{abstract}
In this paper, we determine the homotopy type of the Morse complex of certain collections of simplicial complexes by studying dominating vertices or strong collapses. We show that if $K$ contains two leaves that share a common vertex, then the Morse complex is strongly collapsible and hence has the homotopy type of a point.  We also show that the pure Morse complex of a tree is strongly collapsible, thereby recovering as a corollary a result of Ayala et al.  In addition, we prove that the Morse complex of a disjoint union $K\sqcup L$ is the Morse complex of the join $K*L$.  This result is used to compute the homotopy type of the Morse complex of some families of graphs, including Caterpillar graphs, as well as the automorphism group of a disjoint union for a large collection of disjoint complexes.
\end{abstract}

\maketitle

\section{Introduction}

Let $K$ be an abstract, finite simplicial complex.  Forman's discrete Morse theory \cite{F-95,F-02} yields a method by which one can construct a gradient vector field on $K$.  This can be used, for example, to compute the Betti numbers of $K$ \cite[Section 8]{F-95}.  The Morse complex of $K$, denoted $\M(K)$, was introduced by Chari and Joswig \cite{CJ-2005} in 2005 as the simplicial complex of all gradient vector fields on $K$. The Morse complex of $K$ is rich enough to reconstruct the isomorphism type of $K$ as was shown by Capitelli and Minian \cite{CM-17}.  Yet, it was shown by the same authors that the simple homotopy type of the Morse complex does not determine the simple homotopy type of $K$.  In general, an explicit computation of the homotopy type of the Morse complex is known for only a handful of complexes. Kozlov determined the homotopy type of the Morse complex of paths and cycles \cite{Kozlov99} while other authors have looked at the complex of discrete Morse functions generated by the maximum gradient vector fields, the so-called pure Morse complex \cite{CJ-2005, A-F-Q-V-08}. While some general connectivity bounds are known \cite{scoville2020higher}, determining the homotopy type of the Morse complex remains elusive, even for a complex as small as the $3$-simplex.

The goal of this paper is to compute the homotopy type of the Morse complex of several classes of simplicial complexes including a general sufficient condition that guarantees the Morse complex is collapsible. One tool we use to aid in our computation is that of a dominating vertex. If every maximal facet of vertex $u$ also contains vertex $v$, then $v$ is said to dominate $u$. In this case, all simplices of $K$ containing $u$ may be removed from $K$ without changing the homotopy type.  Such a removal is called a strong collapse.  In fact, because it can be shown that a strong collapse is a sequence of collapses in Forman's sense, removing or attaching dominated vertices creates a new notion of equivalence known as strong homotopy \cite{BarmakThesis, BarMin12}. If there is a sequence of strong collapses from $K$ to a vertex, then $K$ is called strongly collapsible.  We prove that the existence of two leaves attached to the same vertex on $K$ guarantees that the resulting Morse complex is strongly collapsible, and hence, its Morse complex has the homotopy type of a point. Proposition \ref{prop: pure complex tree sc} shows the pure Morse complex of a tree is strongly collapsible, extending a result of Ayala et al. \cite{A-F-Q-V-08}.

We also show that the Morse complex of a disjoint union of $K$ and $L$ is the join of the Morse complexes on $K$ and $L$.  This result is used several times throughout this paper. If $K=C_n\vee \ell$ where $C_n$ is a cycle of length $n$ and $\ell$ is a leaf, we prove that the Morse complex of $K$ collapses to the Morse complex of a disjoint union of a path of length $n-1$ and path of length $1$. Using the result mentioned above along with Kozlov's result, we then compute the homotopy type of the Morse complex of $C_n\vee \ell$. Furthermore, if $G$ is any graph, we may attach a leaf to each vertex, forming a new graph $L(G)$.  The centipede graph is one type of graph that is obtained in this way. We then show that the Morse complex of $L(G)$ has the homotopy type of $S^{|V(G)|-1}$. Finally, we use the disjoint union result to compute the automorphism group of the Morse complex of the disjoint union for a large collection of complexes, continuing work begun in \cite{LinSco-19}, where a subset of the authors computed the automorphism group of the Morse complex of any connected simplicial complex $K$.

\section{Background}\label{sec: background}
In this section we establish the notation, terminology, and background results that will be needed throughout this paper. All simplicial complexes are assumed to be connected unless otherwise stated. We use $\simeq$ to denote a homotopy equivalence and $\cong$ to denote isomorphic.

\subsection{Simplicial complexes and the Morse complex}

Here we recall some basic notions of simplicial complexes and the Morse complex. Our reference for simplicial complexes is \cite{F-P11} or \cite{J-11} while references for discrete Morse theory and the Morse complex are found in \cite{F-02, KnudsonBook, scoville19}.

%
%

\begin{defn} Let $K$ be a simplicial complex.  A \textbf{discrete vector field} $V$ on $K$ is defined by
$$V:=\{(\sigma^{(p)}, \tau^{(p+1)}) : \sigma< \tau, \text{ each simplex of } K \text{ in at most one pair}\}.
$$
Any pair in $(\sigma,\tau)\in V$ is called a \textbf{regular pair}, and $\sigma, \tau$ are called \textbf{regular simplices} or just \textbf{regular}.  If $(\sigma^{(p)},\tau^{(p+1)})\in V$, we say that $p+1$ is the \textbf{index} of the regular pair. Any simplex in $K$ which is not in $V$ is called \textbf{critical}.
\end{defn}

\begin{defn}
Let $V$ be a discrete vector field on a simplicial complex $K$.  A \textbf{$V$-path} or \textbf{gradient path}  is a sequence of simplices $$\alpha^{(p)}_0, \beta^{(p+1)}_0, \alpha^{(p)}_1, \beta^{(p+1)}_1, \alpha^{(p)}_2\ldots , \beta^{(p+1)}_{k-1}, \alpha^{(p)}_{k}$$ of $K$ such that $(\alpha^{(p)}_i,\beta^{(p+1)}_i)\in V$ and $\beta^{(p+1)}_i>\alpha_{i+1}^{(p)}\neq \alpha_{i}^{(p)}$ for $0\leq i\leq k-1$. If $k\neq 0$, then the $V$-path is called  \textbf{non-trivial.}  A $V$-path is said to be  \textbf{closed} if $\alpha_{k}^{(p)}=\alpha_0^{(p)}$.  A discrete vector field $V$ which contains no  non-trivial closed $V$-paths is called a \textbf{gradient vector field}. We sometimes use $f$ to denote a gradient vector field.
\end{defn}

If the gradient vector field $f$ consists of only a single element, we call $f$ a \textbf{primitive} gradient vector field. Given multiple primitive gradient vector fields, we may combine them to form new gradient vector fields.

If $f,g$ are two gradient vector fields on $K$, write $g\leq f$ whenever the regular pairs of $g$ are also regular pairs of $f$.  In general, we say that a collection of primitive  gradient vector fields $f_0, f_1, \ldots, f_n$ is \textbf{compatible} if there exists a gradient vector field $f$ such that $f_i\leq f$ for all $0\leq i\leq n$.

\begin{defn}\label{MorseComplexDef2}  The  \textbf{Morse complex} of $K$, denoted $\mathcal{M}(K)$, is the simplicial complex whose vertices are given by primitive gradient vector fields and whose $n$-simplices are given by gradient vector fields with $n+1$ regular pairs.  A gradient vector field $f$ is then associated with all primitive gradient vector fields $f:=\{f_0, \ldots, f_n\}$ with $f_i\leq f$ for all $0\leq i\leq n$.
\end{defn}

\begin{ex}\label{complex of P_3} As a simple example, we find the More complex of the following complex $K$:

$$
\begin{tikzpicture}[
    decoration={markings,mark=at position 0.6 with {\arrow{triangle 60}}},
    ]

\node[inner sep=1pt, circle, fill=black] (u) at (0,0) [draw] {};
\node[inner sep=1pt, circle, fill=black] (v) at (3,0) [draw] {};
\node[inner sep=1pt, circle, fill=black] (w) at (6,0) [draw] {};

\draw[-]  (u)--(v);
\draw[-]  (v)--(w);

\node[anchor = north]  at (u) {{$u$}};
\node[anchor = north]  at (v) {{$v$}};
\node[anchor = north]  at (w) {{$w$}};.

\end{tikzpicture}
$$

There are four primitive gradient vector field, namely, $(u,uv), (w,vw), (v,uv)$, and $(v,vw)$ along with compatabilities   $V_1=\{(u,uv), (v,vw)\}, V_2=\{(w,vw), (v,uv)\}$, and $V_3=\{(u,uv), (w,vw)\}$.  Hence the Morse complex is given by

$$
\begin{tikzpicture}[
    decoration={markings,mark=at position 0.6 with {\arrow{triangle 60}}},
    ]

\node[inner sep=1pt, circle, fill=black] (a) at (0,2) [draw] {};
\node[inner sep=1pt, circle, fill=black] (b) at (0,0) [draw] {};
\node[inner sep=1pt, circle, fill=black] (c) at (2,0) [draw] {};
\node[inner sep=1pt, circle, fill=black] (d) at (2,2) [draw] {};

\draw[-]  (a)--(b) node[midway, left] {$V_1$};
\draw[-]  (c)--(d) node[midway, right] {$V_2$};
\draw[-]  (a)--(d) node[midway, above] {$V_3$};

\node[anchor = east]  at (a) {$(u,uv)$};
\node[anchor = east]  at (b) {$(v,vw)$};
\node[anchor = west]  at (c) {$(v,uv)$};
\node[anchor = west]  at (d) {$(w,vw)$};.

\end{tikzpicture}
$$

\end{ex}

\begin{rem} If $(u,vu)$ is a primitive gradient vector field, we sometimes denote this as $(u)v$, and if $(vw, vwu)$ is a primitive gradient vector field, we sometimes denote this as $(vw)u$.
\end{rem}

\subsection{Strong collapsibility}

In this section, we review the basics of dominating vertices and strong collapsibility.  Many of the ideas in this section are originally due to J. Barmak \cite{BarmakThesis,BarMin12}.

\begin{defn} Let $K$ be a simplicial complex.  A vertex $v$ is said to  \textbf{dominate} $v'$ (it is also said that $v'$ is \textbf{dominated} by $v$) if every facet of $v'$ also contains $v$.
\end{defn}

We use the notation $K-\{v'\}:=\{\sigma \in K: v'\not \in \sigma\}$. It is easy to see that if $v'$ is dominated by some vertex $v\in K$, then $K-\{v'\}$ is a simplicial complex.

\begin{defn} If $v$ dominates $v'$, then the removal of $v'$ from $K$ is called an \textbf{elementary strong collapse}  and is denoted by $K\searrow\searrow K-\{v'\}$.  The addition of a dominated vertex is an \textbf{elementary strong expansion}, and is denoted $\nearrow\nearrow$.  A sequence of elementary strong collapses or elementary strong expansions is also called a strong collapse or strong expansion, respectively, and also denoted $\nearrow\nearrow$ or $\searrow\searrow$, respectively. If there is a sequence of strong collapses and expansions from $K$ into $L$, then $K$ and $L$ are said to have the same  \textbf{strong homotopy type}, denoted $K\approx L.$ In particular, if $L=*$, then $K$ is said to have the \textbf{strong homotopy type of a point}. If there is a sequence of elementary strong collapses from $K$ to a point, $K$ is called \textbf{strongly collapsible}.
\end{defn}

\begin{rem}
Since a strong collapse is a sequence of collapses, it follows that if a complex is strongly collapsible, then it is collapsible and hence has the homotopy type of a point.
\end{rem}

Call a simplicial complex $K$  \textbf{minimal} if it contains no dominating vertices.

\begin{ex}\label{ex: argentinean complex} The following simplicial complex is minimal since it has no dominating vertices.

$$
\begin{tikzpicture}[scale = .4]
\filldraw[fill=black!30, draw=black] (0,0)--(10,0)--(5,2)--cycle;
\filldraw[fill=black!30, draw=black] (0,0)--(5,2)--(4,4)--cycle;
\filldraw[fill=black!30, draw=black] (0,0)--(4,4)--(5,9)--cycle;
\filldraw[fill=black!30, draw=black] (10,0)--(5,2)--(6,4)--cycle;
\filldraw[fill=black!30, draw=black] (10,0)--(6,4)--(5,9)--cycle;
\filldraw[fill=black!30, draw=black] (5,2)--(4,4)--(6,4)--cycle;
\filldraw[fill=black!30, draw=black] (4,4)--(6,4)--(5,9)--cycle;

\node[inner sep=1pt, circle, fill=black] (1) at (0,0) [draw] {};
\node[inner sep=1pt, circle, fill=black] (2) at (10,0) [draw] {};
\node[inner sep=1pt, circle, fill=black] (3) at (5,2) [draw] {};
\node[inner sep=1pt, circle, fill=black] (4) at (4,4) [draw] {};
\node[inner sep=1pt, circle, fill=black] (5) at (6,4) [draw] {};
\node[inner sep=1pt, circle, fill=black] (6) at (5,9) [draw] {};

\node[inner sep=0pt](a) at (5,3/4) {};
\node[inner sep=0pt](b) at (3.5,2.3) {};
\node[inner sep=0pt](c) at (5,3.4) {};
\node[inner sep=0pt](d) at (6,2.3) {};
\node[inner sep=0pt](e) at (3.2,4.8) {};
\node[inner sep=0pt](f) at (5,5) {};
\node[inner sep=0pt](g) at (6.6,4.8) {};

\path[style=semithick] (1) edge node[]{\small{}}(2);
\path[style=semithick] (1) edge node[]{\small{}}(6);
\path[style=semithick] (1) edge node[]{\small{}}(3);
\path[style=semithick] (1) edge node[]{\small{}}(4);
\path[style=semithick] (2) edge node[]{\small{}}(3);
\path[style=semithick] (2) edge node[]{\small{}}(5);
\path[style=semithick] (2) edge node[]{\small{}}(6);
\path[style=semithick] (3) edge node[]{\small{}}(4);
\path[style=semithick] (3) edge node[]{\small{}}(5);
\path[style=semithick] (4) edge node[]{\small{}}(5);
\path[style=semithick] (4) edge node[]{\small{}}(6);
\path[style=semithick] (5) edge node[]{\small{}}(6);

\end{tikzpicture}
$$

Note, however, that $K$ is collapsible as well as contractible.
\end{ex}

\begin{defn}\label{defn: core} Let $K$ be a simplicial complex.  The \textbf{core} of $K$ is the minimal subcomplex $K_0 \subseteq K$ such  that $K\searrow\searrow K_0$.
\end{defn}

By \cite[Theorem 5.1.10]{BarmakThesis}, the use of the definite article ``the" in Definition \ref{defn: core} is justified. It follows immediately that the order in which one performs strong collapses on a complex $K$ does not matter, as any sequence of strong collapses of $K$ will eventually yield $K_0$.

One construction that is particularly well-behaved with respect to strong collapses is the join.

\begin{defn}\label{defn: join} Let $K,L$ be two simplicial complexes with no vertices in common.  Define the \textbf{join} of $K$ and $L$, denoted $K*L$, by
$$K*L:= \{\sigma, \tau, \sigma\cup \tau : \sigma \in K, \tau \in L\}.$$ The special case when $L=\{v,w\}$ for vertices $v,w\not \in K$ is the \textbf{suspension} {$\Sigma K$} of $K$.
\end{defn}

If one of the factors in the join is strongly collapsible, then the join is strongly collapsible.

 \begin{prop}\label{prop: Barmakthesis}\cite[Proposition 5.1.16]{BarmakThesis} Let $K, L$ be simplicial complexes.  Then $K*L$ is strongly collapsible if and only if $K$ or $L$ is strongly collapsible.
 \end{prop}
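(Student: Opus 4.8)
The plan is to prove Proposition \ref{prop: Barmakthesis} directly from the definition of an elementary strong collapse, by showing that a dominating vertex in $K$ gives a dominating vertex in $K*L$, and conversely that a dominating vertex in $K*L$ must come from one of the two factors. The key observation is a description of the facets (maximal simplices) of a join: a facet of $K*L$ has the form $\sigma \cup \tau$ where $\sigma$ is a facet of $K$ and $\tau$ is a facet of $L$. Indeed any simplex of $K*L$ is $\sigma'\cup\tau'$ with $\sigma'\in K$, $\tau'\in L$, and this is contained in $\sigma\cup\tau$ whenever $\sigma'\subseteq\sigma$, $\tau'\subseteq\tau$; maximality then forces $\sigma,\tau$ to be facets. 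With this in hand, the forward-ish direction is easy: if, say, $v$ dominates $v'$ in $K$ (both vertices of $K$), then every facet of $v'$ in $K*L$ has the form $\sigma\cup\tau$ with $\sigma$ a facet of $K$ containing $v'$; since $v$ dominates $v'$ in $K$, every facet of $K$ containing $v'$ contains $v$, so $v\in\sigma\cup\tau$, hence $v$ dominates $v'$ in $K*L$. Therefore $K*L\searrow\searrow (K-\{v'\})*L$, and iterating a full sequence of strong collapses of $K$ down to a point $w$ shows $K*L\searrow\searrow \{w\}*L$, which is a cone and hence strongly collapsible (the apex $w$ dominates every vertex of $L$). The same argument works with the roles of $K$ and $L$ swapped.

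For the converse, suppose $K*L$ is strongly collapsible; I want to conclude that $K$ or $L$ is. The natural approach is to show that if neither $K$ nor $L$ has a dominating vertex, then neither does $K*L$ — equivalently, any dominating vertex of $K*L$ is a dominating vertex of one of the factors. So suppose $v$ dominates $v'$ in $K*L$. First, $v$ and $v'$ lie in the same factor: if $v'$ were a vertex of $K$ and $v$ a vertex of $L$, pick any facet $\tau_0$ of $L$ with $v\notin\tau_0$ if one exists — but more carefully, $v'$ has a facet of the form $\sigma\cup\tau$ with $\sigma$ a facet of $K$ containing $v'$ and $\tau$ a facet of $L$; we can choose $\tau$ to be any facet of $L$, and if $L$ has at least two vertices it has a facet not containing $v$ unless... this needs the genuinely careful case analysis. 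Let me instead argue: the link of $v'$ in $K*L$ is $(\mathrm{lk}_K v')*L$ when $v'\in K$. A vertex $v$ dominates $v'$ iff $v$ is contained in every facet of $v'$, which (one checks) is equivalent to $v$ being a vertex of the link of $v'$ that is contained in every facet of that link, i.e. $v$ is in the intersection of all facets of $\mathrm{lk}_{K*L}(v') = (\mathrm{lk}_K v')*L$. The facets of $(\mathrm{lk}_K v')*L$ are the $\sigma\cup\tau$ with $\sigma$ a facet of $\mathrm{lk}_K v'$ and $\tau$ a facet of $L$, so their intersection is $\big(\bigcap \text{facets of }\mathrm{lk}_K v'\big)\cup\big(\bigcap\text{facets of }L\big)$. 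Hence $v$ dominates $v'$ in $K*L$ iff $v\in\bigcap(\text{facets of }\mathrm{lk}_K v')$ or $v\in\bigcap(\text{facets of }L)$ — wait, it should be a union of the two intersection sets, so $v$ lies in one of them. If $v\in\bigcap(\text{facets of }\mathrm{lk}_K v')$ and $v\neq v'$, that says $v$ dominates $v'$ in $K$; if $v\in\bigcap(\text{facets of }L)$, that says $L$ is a cone with apex $v$, so $v$ dominates every other vertex of $L$ (and $L$ is strongly collapsible outright). In either case one of the factors has a dominating vertex.

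Putting the converse together: if $K*L$ is strongly collapsible, its core is a point. If neither $K$ nor $L$ had a dominating vertex, then by the link computation above $K*L$ would have no dominating vertex either, so its core would be $K*L$ itself, forcing $K*L = *$, hence one of the factors is empty — but the empty complex case is degenerate (and a join with the empty complex returns the other factor), so really we may assume both are nonempty, giving a contradiction unless one factor is a single point, which is strongly collapsible. Hence at least one of $K$, $L$ is strongly collapsible. I expect the main obstacle to be the bookkeeping in the converse: pinning down precisely the statement ``$v$ dominates $v'$ in $K*L$ iff ($v$ dominates $v'$ in the factor containing $v'$) or (the other factor is a cone with apex $v$)'', including the edge cases where a factor is a single vertex or empty, and making sure the iterated strong collapse in the forward direction is legitimate (each elementary collapse $(K-\{v'\})*L$ remains a valid join of simplicial complexes, which it does since $K-\{v'\}$ is a simplicial complex). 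An alternative, cleaner route for the converse is to quote \cite[Theorem 5.1.10]{BarmakThesis} on uniqueness of the core together with the fact (provable by the same link computation) that $\mathrm{core}(K*L) = \mathrm{core}(K)*\mathrm{core}(L)$; then $K*L$ strongly collapsible $\iff \mathrm{core}(K)*\mathrm{core}(L) = * \iff \mathrm{core}(K)=*$ or $\mathrm{core}(L)=* \iff K$ or $L$ strongly collapsible.
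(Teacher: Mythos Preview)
The paper does not prove this proposition; it is quoted directly from \cite[Proposition 5.1.16]{BarmakThesis} and used as a black box. So there is no ``paper's own proof'' to compare against, and your proposal is effectively a reconstruction of Barmak's argument.

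Your forward direction is correct and clean: the description of facets of $K*L$ as $\sigma\cup\tau$ with $\sigma$ a facet of $K$ and $\tau$ a facet of $L$ immediately gives that domination in a factor implies domination in the join, and iterating collapses $K*L$ down to a cone.

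Your converse is also correct in substance, via the link identity $\mathrm{lk}_{K*L}(v')=\mathrm{lk}_K(v')*L$ and the resulting dichotomy ``$v$ dominates $v'$ in its own factor'' versus ``the other factor has a vertex in every facet.'' One caution: the formula $\mathrm{core}(K*L)=\mathrm{core}(K)*\mathrm{core}(L)$ that you propose at the end is not literally true when one of the cores is a single point, since then the right-hand side is a cone with at least two vertices while the left-hand side is a point. What \emph{is} true, and suffices, is that $K*L\searrow\searrow \mathrm{core}(K)*\mathrm{core}(L)$, and that $\mathrm{core}(K)*\mathrm{core}(L)$ is minimal whenever both cores have at least two vertices (your link computation shows exactly this: a minimal complex on $\geq 2$ vertices can have no vertex lying in every facet). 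Hence if $K*L$ is strongly collapsible but neither core is a point, $\mathrm{core}(K)*\mathrm{core}(L)$ would be a minimal complex with $\geq 4$ vertices strongly equivalent to a point, contradicting uniqueness of the core. With that small repair your argument is complete.
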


\section{Collapsibility of $\M(K)$}

We study the collapsibility of $\M(K)$ by looking at the slightly more general question of strong collapsibility of $\M(K)$. We first give a general condition on $K$ that guarantees $\M(K)$ is not strongly collapsible in Proposition \ref{prop: no leaf implies not collapsible}.

\subsection{Minimal Morse complexes}

\begin{lem}\label{lem: index 1}
Let $K$ be a simplicial complex. If $(\sigma_1^{(p)},\tau^{(p+1)}) \in V(\mathcal{M}(K))$ dominates some other vertex $(\alpha, \beta) \in \mathcal{M}(K)$, then $p = 0$.
\end{lem}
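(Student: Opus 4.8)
The plan is to argue by contraposition: I assume $(\sigma_1^{(p)},\tau^{(p+1)})$ is a vertex of $\M(K)$ with $p \geq 1$ and exhibit, for every candidate vertex $(\alpha,\beta)$ of $\M(K)$, a facet of $(\alpha,\beta)$ that does not contain $(\sigma_1,\tau)$; this shows $(\sigma_1,\tau)$ dominates nothing, which is the contrapositive of the claim. The key structural observation is that when $p \geq 1$, the simplex $\sigma_1$ has dimension at least $1$, so it has at least two codimension-one faces inside $\tau$, giving room to build competing primitive gradient vector fields.

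Here is how I would organize the case analysis. Suppose $(\sigma_1,\tau)$ dominates $(\alpha,\beta)$. First handle the case $(\alpha,\beta) = (\sigma_1,\tau)$ itself: a vertex cannot be dominated only by itself in a meaningful way, so I really need a \emph{second} vertex in the dominating role, and the argument below produces facets of $(\sigma_1,\tau)$ avoiding any fixed other vertex. For the general case, since $(\sigma_1,\tau)$ and $(\alpha,\beta)$ must be compatible (they lie in a common edge of $\M(K)$), $\beta \neq \tau$ and the pairs do not clash. Because $\dim \sigma_1 = p \geq 1$, pick a vertex $x \in \sigma_1$ and let $\sigma_1' = \sigma_1 \setminus \{x\}$; then $(\sigma_1', \sigma_1)$ is a primitive gradient vector field. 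I would then look at small compatible sets such as $\{(\alpha,\beta),(\sigma_1',\sigma_1)\}$ and, more to the point, at facets of $(\alpha,\beta)$ obtained by adjoining primitive fields supported on simplices \emph{disjoint in the relevant sense} from $\tau$ and $\sigma_1$ — for instance pairs of the form $(\rho, \rho \cup \{y\})$ where $\rho$ is a face of $\tau$ other than $\sigma_1$, or pairs involving $\tau$ itself as the smaller simplex, $(\tau, \tau \cup \{z\})$. The combinatorics of which primitive fields are compatible with a \emph{given} pair $(\alpha,\beta)$ but \emph{incompatible} with $(\sigma_1,\tau)$ is exactly what forces the existence of a facet of $(\alpha,\beta)$ missing $(\sigma_1,\tau)$: if $\sigma_1 < \tau$ with $\dim\sigma_1 \geq 1$, then a pair like $(\sigma_1'', \sigma_1)$ using a different facet $\sigma_1'' \ne \sigma_1'$ of $\sigma_1$, or a pair re-pairing $\tau$ upward, conflicts with $(\sigma_1,\tau)$ on the simplex $\sigma_1$ or $\tau$, so it cannot be added to the face $\{(\sigma_1,\tau),(\alpha,\beta)\}$ but can be added to $\{(\alpha,\beta)\}$, producing the desired witness. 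I would run this through the two subcases according to whether $\sigma_1$ or $\tau$ (or neither) meets $\alpha,\beta$.

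**Main obstacle.** The delicate point is bookkeeping: I must guarantee that the auxiliary primitive gradient vector field I add is (i) genuinely a primitive \emph{gradient} vector field — no issue for a single pair — (ii) compatible with $(\alpha,\beta)$, i.e.\ no simplex is used twice and no nontrivial closed path is created when the two pairs are combined, and (iii) genuinely \emph{incompatible} with $(\sigma_1,\tau)$. Ensuring (ii) and (iii) simultaneously while covering all positions of $(\alpha,\beta)$ relative to the fixed pair is where the real work lies, and it is plausible the cleanest route is to split on $\dim\alpha$ and on whether $\{\alpha,\beta\}\cap\{\sigma_1,\tau\}=\varnothing$, mirroring the style of Example~\ref{ex: delta 2 more not sc}; the $p\geq 1$ hypothesis is used precisely to supply two distinct facets of $\sigma_1$, which is the gadget that makes (iii) achievable in every case.
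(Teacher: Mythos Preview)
Your overall strategy matches the paper's: to show $(\sigma_1,\tau)$ fails to dominate $(\alpha,\beta)$, exhibit a primitive gradient vector field $v$ compatible with $(\alpha,\beta)$ but incompatible with $(\sigma_1,\tau)$; any facet of $\M(K)$ containing $\{(\alpha,\beta),v\}$ then omits $(\sigma_1,\tau)$.

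Where you diverge is in the choice of $v$. You look \emph{downward}, proposing $v=(\sigma_1',\sigma_1)$ for a facet $\sigma_1'$ of $\sigma_1$, using that $\dim\sigma_1\ge 1$ gives at least two such facets. The paper instead looks \emph{sideways}: since $\dim\tau=p+1\ge 2$, the simplex $\tau$ has at least three codimension-one faces $\sigma_1,\sigma_2,\sigma_3$, and the paper takes $v=(\sigma_i,\tau)$ with $i\ge 2$. Its case analysis is then short: if $(\alpha,\beta)$ were incompatible with $(\sigma_2,\tau)$, then (since $\alpha=\tau$ or $\beta=\tau$ would already clash with $(\sigma_1,\tau)$) one has $\alpha=\sigma_2$ or $\beta=\sigma_2$; either way $(\alpha,\beta)$ is compatible with $(\sigma_3,\tau)$, finishing the proof.

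Your gadget can also be made to work, but you have not carried out the verification you yourself flag as ``where the real work lies,'' and some of the scaffolding you propose is off target. Compatibility of $(\alpha,\beta)$ with $(\sigma_1,\tau)$ already forces $\{\alpha,\beta\}\cap\{\sigma_1,\tau\}=\varnothing$, so your proposed split on that intersection is vacuous. The actual check is brief: with $\alpha,\beta\ne\sigma_1$, the pair $(\sigma_1',\sigma_1)$ can fail to be compatible with $(\alpha,\beta)$ only when $\alpha=\sigma_1'$ or $\beta=\sigma_1'$ (a two-pair closed path would force $\beta=\sigma_1$, which is excluded); since $\dim\alpha\ne\dim\beta$, at most one facet of $\sigma_1$ is ruled out, and $\sigma_1$ has at least two. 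Finally, your alternative of ``re-pairing $\tau$ upward'' via $(\tau,\tau\cup\{z\})$ is unreliable, since $\tau$ may be maximal in $K$.
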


\begin{proof} Suppose that $p > 0$. Since $\tau$ is of dimension $p+1$, it has exactly $p$ codimension 1 faces (including $\sigma_1$), say $\sigma_1, \sigma_2, \ldots, \sigma_{p+1}.$  These may be paired with $\tau$ to create a primitive vector field $(\sigma_i,\tau)$ on $K$ which in turn corresponds to vertices
 $$(\sigma_1, \tau), (\sigma_2,\tau) ,(\sigma_3,\tau), \dots , (\sigma_{p+1},\tau) \in V(\M(K)).$$

 Notice that $(\sigma_1,\tau)$ is not compatible with any of these vertices. Thus no facet of $(\alpha, \beta)$ contains any of those $p$ vertices so that $(\alpha, \beta)$ must be incompatible with those $p$ vertices. Since $(\alpha,\beta)$ is incompatible with $(\sigma_2, \tau)$, exactly one of the following must occur:
\begin{enumerate}
\item[i)] $\alpha = \sigma_2$
\item[ii)] $\beta = \sigma_2$
\item[iii)] $\alpha = \tau$
\item[iv)] $\beta = \tau$
\end{enumerate}

Now if either iii) or iv) hold, then $(\alpha, \beta)$ will not be compatible with $(\sigma_1, \tau)$. Therefore, either i) or ii) must hold. We proceed by cases.

\textbf{Case 1: } $\alpha = \sigma_2$. Then $(\alpha, \beta) = (\sigma_2, \beta)$ where $\beta \neq \tau$. This implies that $(\alpha, \beta)$ is compatible with $(\sigma_3, \tau)$, a contradiction.

\textbf{Case 2:} $\beta = \sigma_2$. Then $(\alpha, \beta) = (\alpha, \sigma_2)$. We must have $\dim \alpha = \dim\sigma_2 - 1 = p-1$. Therefore $\alpha \neq \sigma_3$, so $(\alpha, \beta)$ is compatible with $(\sigma_3, \tau)$, again a contradiction.

We conclude that no such vertex $(\sigma_1,\tau)$ exists for $p>0$.
\end{proof}

\begin{rem}
Although a dominating vertex in $\M(K)$ cannot come from a vector of index greater than $1$, if $\dim \sigma_1 = 0$, it is possible for $(\sigma_1, \tau)$ to dominate another vertex. In our proof, we required that $\tau$ have at least $3$ faces of codimension $1$, but if $\dim \tau = 1$, there are only two faces of codimension $1$. A simple example is $K=$
$$
\begin{tikzpicture}[
    decoration={markings,mark=at position 0.6 with {\arrow{triangle 60}}},
    ]

\node[inner sep=1pt, circle, fill=black] (u) at (0,0) [draw] {};
\node[inner sep=1pt, circle, fill=black] (v) at (3,0) [draw] {};
\node[inner sep=1pt, circle, fill=black] (w) at (6,0) [draw] {};

\draw[-]  (u)--(v);
\draw[-]  (v)--(w);

\node[anchor = north]  at (u) {\small{$u$}};
\node[anchor = north]  at (v) {\small{$v$}};
\node[anchor = north]  at (w) {\small{$w$}};.

\end{tikzpicture}
$$

We saw in Example \ref{complex of P_3} that the Morse complex is given by:

$$
\begin{tikzpicture}[
    decoration={markings,mark=at position 0.6 with {\arrow{triangle 60}}},
    ]

\node[inner sep=1pt, circle, fill=black] (a) at (0,2) [draw] {};
\node[inner sep=1pt, circle, fill=black] (b) at (0,0) [draw] {};
\node[inner sep=1pt, circle, fill=black] (c) at (2,0) [draw] {};
\node[inner sep=1pt, circle, fill=black] (d) at (2,2) [draw] {};

\draw[-]  (a)--(b) node[midway, left] {};
\draw[-]  (c)--(d) node[midway, right] {};
\draw[-]  (a)--(d) node[midway, above] {};

\node[anchor = east]  at (a) {$(u,uv)$};
\node[anchor = east]  at (b) {$(v,vw)$};
\node[anchor = west]  at (c) {$(v,uv)$};
\node[anchor = west]  at (d) {$(w,vw)$};.

\end{tikzpicture}
$$

In this case, vertex $(v,vw)$ dominates $(u,uv)$.
\end{rem}

\begin{prop}\label{prop: no leaf implies not collapsible}
Let $K$ be a simplicial complex. If all vertices $v \in V(K)$ have degree at least $2$, then $\mathcal{M}(K)$ is minimal. In particular, $\M(K)$ is not strongly collapsible.
\end{prop}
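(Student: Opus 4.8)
By Lemma~\ref{lem: index 1}, the only vertices of $\M(K)$ that can possibly dominate another vertex are primitive gradient vector fields of index $1$, i.e.\ pairs of the form $(u, uv)$ where $u$ is a vertex and $uv$ an edge of $K$. So the entire proposition reduces to showing: if every vertex of $K$ has degree at least $2$, then no vertex of the form $(u,uv)$ dominates any other vertex of $\M(K)$. I would argue by contradiction: suppose $(u,uv)$ dominates some vertex $(\alpha,\beta)\in\M(K)$, meaning every facet of $\M(K)$ containing $(u,uv)$ also contains $(\alpha,\beta)$.

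The key is to exhibit enough explicit maximal gradient vector fields containing $(u,uv)$ to pin down, and then rule out, every candidate $(\alpha,\beta)$. Since $u$ has degree at least $2$, there is a second edge $uw$ at $u$ with $w\neq v$. The primitive vector fields $(u,uv)$ and $(u,uw)$ are incompatible (they share the simplex $u$), and similarly $(u,uv)$ is incompatible with $(v,uv)$ and with $(uv,\gamma)$ for any $2$-simplex $\gamma>uv$. As in the proof of Lemma~\ref{lem: index 1}, incompatibility of $(\alpha,\beta)$ with each of these forced non-neighbors gives a short list of possibilities for $(\alpha,\beta)$: one of $\alpha$ or $\beta$ must equal $u$, $v$, $uv$, or $uw$. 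The case $\alpha=u$ or $\beta=u$ or (a face of) $uv$ will be handled by noting that $(\alpha,\beta)$ would then be incompatible with $(u,uv)$ itself or could be extended to a larger gradient vector field not containing $(\alpha,\beta)$; the remaining cases involving $w$ (for instance $\alpha=u,\beta=uw$ forces $(\alpha,\beta)=(u,uw)$, already incompatible with $(u,uv)$) are eliminated the same way. The upshot is that there is no consistent choice of $(\alpha,\beta)$, so $(u,uv)$ dominates nothing.

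Concretely I would proceed as follows. First, reduce to index-$1$ vertices via Lemma~\ref{lem: index 1}. Second, fix a would-be dominated vertex $(\alpha,\beta)$ and use the degree-$\geq 2$ hypothesis at $u$ to produce the incompatible vertex $(u,uw)$; also use $(v,uv)$, which always exists. Third, run the incompatibility case analysis (mirroring Lemma~\ref{lem: index 1}, but now the ``extra face'' of the $1$-simplex $uv$ is supplied by the genuinely distinct edge $uw$ rather than by a third codimension-$1$ face of $\tau$) to conclude every case is contradictory. Fourth, conclude $\M(K)$ has no dominating vertices, hence is minimal; and since a nontrivial minimal complex is its own core and a strongly collapsible complex strong-collapses to a point, $\M(K)$ cannot be strongly collapsible (unless it is already a single point, which does not occur here since $K$ has at least one edge and any vertex of degree $\geq 2$ forces $\M(K)$ to have more than one vertex).

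I expect the main obstacle to be bookkeeping in the case analysis: one must be careful that the various primitive fields invoked ($(u,uw)$, $(v,uv)$, and if necessary $(w,uw)$ using $\deg w\geq 2$, or a second edge at $v$) genuinely exist and are pairwise of the right (in)compatibility type, and that the dimension constraints (e.g.\ $\dim\alpha=\dim\beta-1$) rule out the leftover sub-cases cleanly. The degree hypothesis is used precisely to guarantee the auxiliary edge $uw$; a leaf at $u$ is exactly what the remark before the proposition shows can break the argument, so the proof should make transparent where ``degree at least $2$'' is indispensable.
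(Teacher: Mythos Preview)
Your opening sentence reverses the definition of domination: ``$(u,uv)$ dominates $(\alpha,\beta)$'' means every facet of $(\alpha,\beta)$ contains $(u,uv)$, not the other way round. The plan to ``exhibit \ldots\ maximal gradient vector fields containing $(u,uv)$'' is phrased to match the wrong reading, yet the step you actually invoke --- that $(\alpha,\beta)$ must be incompatible with every non-neighbor of $(u,uv)$, exactly as in the proof of Lemma~\ref{lem: index 1} --- only follows from the correct one. Once that is straightened out, your case analysis does work: compatibility with $(u,uv)$ together with forced incompatibility with $(v,uv)$ pins $(\alpha,\beta)$ to the form $(v,vx)$ with $x\neq u$, while forced incompatibility with $(u,uw)$ pins it to $\{(w,uw)\}\cup\{(uw,\gamma):\gamma>uw\}$, and these sets are disjoint.

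The paper takes a shorter, constructive route and avoids the case split. Rather than argue by contradiction, it shows directly that for any $w\in V(\M(K))$ and any facet $\sigma$ of $w$ containing $(v)a$, one may swap $(v)a$ for $(v)b$ (the second edge at $v$ supplied by the degree hypothesis) to obtain a facet of $w$ omitting $(v)a$; the exceptional vertex $w=(b)v$ is handled separately by pairing it with $(a)v$. Your approach trades this one-move swap for a Lemma~\ref{lem: index 1}--style enumeration; it is heavier, but it has the virtue that you never need to verify that the swapped set remains a gradient vector field (acyclic and pairwise compatible with every other member of $\sigma$), a point the paper's argument leaves largely implicit.
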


\begin{proof}
By Lemma \ref{lem: index 1}, in order to show that no vertex in $\mathcal{M}(K)$ dominates any other, we need only consider vertices in $\M(K)$ which correspond to a primitive vector of index $1$. Hence consider any vertex $(v)a \in V(\mathcal{M}(K))$ where $v,a \in V(K)$. We claim that $(v)a$ cannot dominate any other vertex of $\mathcal{M}(K)$. Suppose  $w \in V(\mathcal{M}(K))$ is any vertex with a facet $\sigma$ that also contains $(v)a$.
Since $v$ has degree at least $2$, there exists a vertex $b\in V(K)$, $b\neq a$, such that $vb$ is a simplex of $K$. This gives rise to the primitive vector $(v)b$ which is also a vertex of $\M(K)$. If $w = (b)v$, then $w$ is compatible with $(a)v$, so $(v)a$ cannot dominate $w$. Now consider $w \neq (b)v$. Then since $w$ is compatible with $(v)a$, it also must be compatible with $(v)b$. Clearly $w \in \sigma - \{(v)a, (v)b\}$ so that there is a facet of $w$ that contains $\sigma - \{(v)a\} \cup \{(v)b\}$ as a face. Then $w$ has a facet that does not contain $(v)a$, so $(v)a$ does not dominate $w$. As this holds for any $(v)a \in V(\mathcal{M}(K))$, no vertex of $\mathcal{M}(K)$ can dominate another vertex. It follows that $\mathcal{M}(K)$ is minimal.
\end{proof}

\begin{cor}
If $\mathcal{M}(K)$ is not minimal, there exists at least one vertex $v \in K$ with degree $1$.
\end{cor}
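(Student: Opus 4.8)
The plan is to derive this statement directly as the contrapositive of Proposition \ref{prop: no leaf implies not collapsible}, with a short additional argument to upgrade ``degree at most $1$'' to ``degree exactly $1$''.

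First I would invoke Proposition \ref{prop: no leaf implies not collapsible}: it asserts that if every vertex of $K$ has degree at least $2$, then $\mathcal{M}(K)$ is minimal. Contrapositively, if $\mathcal{M}(K)$ is \emph{not} minimal, then $K$ cannot have all vertices of degree at least $2$, so there exists a vertex $v \in V(K)$ of degree at most $1$, that is, of degree $0$ or $1$.

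Next I would rule out the possibility $\deg v = 0$. Since all simplicial complexes in this paper are assumed connected, a vertex of degree $0$ forces $K$ to consist of the single vertex $v$. But then $K$ has no $1$-simplices, hence no primitive gradient vector fields, so $\mathcal{M}(K)$ has no vertices at all; an empty complex contains no dominating vertices and is therefore (vacuously) minimal. This contradicts the hypothesis that $\mathcal{M}(K)$ is not minimal. Hence $\deg v = 1$, as claimed.

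The only place requiring any care is this last edge case — checking that ``$\mathcal{M}(K)$ is not minimal'' is genuinely incompatible with $K$ being a single point — and it is essentially immediate from the definitions. Everything else follows at once from the previously established proposition, so I do not expect any real obstacle.
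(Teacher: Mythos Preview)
Your proposal is correct and matches the paper's approach: the corollary is stated in the paper without proof, immediately after Proposition~\ref{prop: no leaf implies not collapsible}, as its direct contrapositive. Your extra care in ruling out the degree-$0$ case is sound (and arguably more thorough than the paper, which leaves the corollary entirely to the reader).
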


\subsection{Leaves and strong collapsibility}\label{sec: two leaves}

By Proposition \ref{prop: no leaf implies not collapsible}, if $K$ does not contain a leaf, then $\M(K)$ is minimal (and in particular, not strongly collapsible).  On the other hand, if $K$ has two leaves that share a common vertex, $\M(K)$ is strongly collapsible.

\begin{prop}\label{prop: two leaves strongly collapsible}
If $K$ has two leaves sharing a vertex, then $\M(K)$ is strongly collapsible.
\end{prop}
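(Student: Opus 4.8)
The plan is to exhibit an explicit dominating vertex in $\M(K)$ and, more, to show that a single elementary strong collapse reduces $\M(K)$ to the Morse complex (or a cone) of a smaller complex, iterating until we reach a point. Let $a, b$ be the two leaves of $K$ sharing the common vertex $v$, so that $av$ and $bv$ are edges of $K$ and $a, b$ have degree $1$. The key observation should be that the primitive gradient vector field $(a, av)$ — i.e. the vertex $(a)v$ of $\M(K)$ — is incompatible with very few other primitive fields: since $a$ is a leaf, the only primitive fields involving $a$ are $(a)v = (a,av)$ and $(v)a = (v,av)$, and the only primitive field involving the edge $av$ are those two. Hence a primitive field $g$ fails to be compatible with $(a)v$ precisely when $g = (v,av)$ or $g = (w, \tau)$ with $\tau \supseteq \{v\}\cup\{\text{something}\}$ sharing the cell $v$ in the ``wrong'' way — I will need to check that in fact the only obstruction is $g = (v)a$. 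This is the step I expect to require the most care: cataloguing exactly which primitive fields are incompatible with $(a)v$, using leaf-ness of $a$ to rule out all collisions except $(v)a$.

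Granting that, the first move is to find a vertex of $\M(K)$ that dominates $(a)v$. I claim $(b)v = (b,bv)$ dominates $(a)v$. Indeed, let $\sigma$ be any facet (maximal simplex) of $\M(K)$ containing $(a)v$; equivalently, $\sigma$ is a maximal gradient vector field containing the pair $(a,av)$. Since $\sigma$ is maximal and $(b,bv)$ is compatible with $(a,av)$ (the cells $a, av, b, bv$ are pairwise distinct and no $V$-path can be created among leaves), maximality forces $(b,bv) \in \sigma$ — any maximal $V$-field that can absorb another primitive pair must already contain it. Hence every facet of $(a)v$ contains $(b)v$, i.e. $(b)v$ dominates $(a)v$, and we may strongly collapse $\M(K) \searrowsearrow \M(K) - \{(a)v\}$.

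Next I would identify the complex $\M(K) - \{(a)v\}$. Deleting the vertex $(a)v$ from $\M(K)$ leaves the subcomplex of gradient vector fields on $K$ that do not use the pair $(a,av)$; equivalently, those in which the leaf $a$ is either critical or paired as $(v,av)$. I expect a clean description: this subcomplex is a cone with apex $(b)v$ (since by the domination argument every maximal field avoiding $(a,av)$ still contains $(b,bv)$, as $b$ is still a leaf and $(b,bv)$ is compatible with everything not involving $b$ or $bv$). A cone is strongly collapsible: the apex $(b)v$ dominates every other vertex, so we strong collapse straight down to the point $(b)v$. Assembling: $\M(K) \searrowsearrow \M(K) - \{(a)v\} \searrowsearrow *$, so $\M(K)$ is strongly collapsible.

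The main obstacle, as noted, is the compatibility bookkeeping: verifying that $(b,bv)$ is genuinely compatible with every primitive field occurring in a maximal field containing $(a,av)$, and that after deleting $(a)v$ the apex $(b)v$ still dominates everything. The leaf hypothesis is doing all the work here — because $a$ and $b$ each lie in a unique edge, the pairs $(a,av)$ and $(b,bv)$ cannot participate in any nontrivial closed $V$-path and cannot collide with any pair not literally reusing one of the cells $a, av$ (resp. $b, bv$) — and I would state this as a small lemma (``a primitive field on a leaf is compatible with every primitive field not involving that leaf or its unique edge'') before running the two strong collapses. An alternative, if the cone description proves fussy, is to avoid naming the intermediate complex and instead argue directly that after removing $(a)v$ the vertex $(b)v$ dominates all remaining vertices, which is the only thing actually needed for strong collapsibility; I would fall back to that if the explicit identification of $\M(K)-\{(a)v\}$ turns out to be more delicate than the domination check itself.
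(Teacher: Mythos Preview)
Your overall strategy matches the paper's proof exactly --- remove one dominated vertex, then observe that what remains is a cone --- but the specific domination claim is wrong, and this is a genuine gap rather than a bookkeeping slip.

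You assert that $(b)v=(b,bv)$ dominates $(a)v=(a,av)$. It does not. Take the discrete vector field $V=\{(a,av),(v,bv)\}$: the four cells $a,av,v,bv$ are distinct, and the only $V$-path is $a,\,av,\,v,\,bv,\,b$, which is not closed, so $V$ is a gradient vector field. Extend $V$ to any maximal gradient vector field $\sigma$; then $\sigma$ is a facet of $(a)v$ containing $(v,bv)$ and hence \emph{not} containing $(b,bv)$. So $(b)v$ fails to dominate $(a)v$. The error in your justification is the sentence ``maximality forces $(b,bv)\in\sigma$'': for that you need $(b,bv)$ compatible with every pair in $\sigma$, not merely with $(a,av)$, and the pair $(v,bv)$ blocks it. Relatedly, your compatibility analysis (``the only obstruction to $(a)v$ is $(v)a$'') is correct, but it tells you what $(a)v$ can dominate, not what dominates $(a)v$.

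The fix is a small swap of roles, and it is precisely what the paper does. The vertex that \emph{is} dominated is $(v)a=(v,av)$, the pair pointing out of the shared vertex; it is dominated by $(b)v$, because any facet containing $(v,av)$ already uses $v$ and therefore cannot contain $(v,bv)$, the unique primitive incompatible with $(b,bv)$. After removing $(v)a$, the cone apex is $(a)v$ (not $(b)v$): the only primitive incompatible with $(a,av)$ was $(v,av)$, which is now gone, so $(a)v$ lies in every remaining facet. With these two swaps your argument goes through verbatim and coincides with the paper's proof.
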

\begin{proof}

Call the leaves $\{a,ab\}$ and $\{a,ac\}$ where $a,b,c \in V(K)$. These correspond to vertices $(a)b,(b)a,(a)c,(c)a \in V(\M(K))$. We claim that $(b)a$ dominates $(a)c$. Consider any facet $\sigma$ of $(a)c$. The only vertex incompatible with $(b)a$ is $(a)b$, but since $(a)c$ and $(a)b$ are incompatible, $(a)b \not\in \sigma$. Therefore we must have $(b)a \in \sigma$ since $\sigma$ is maximal. Perform the strong collapse given by removing vertex $(a)c$. We claim that $(c)a$ dominates every vertex in the resulting complex. Consider an arbitrary $v \in V(\M(K)) - (a)c$ and a facet $\tau$ containing $v$. The only vertex that $(c)a$ is incompatible with in $V(\M(K))$ is $(a)c$. Since $(a)c \not\in V(\M(K)) - (a)c$, we know that $(c)a$ is compatible with every vertex in $\tau$, so $(c)a \in \tau$. Therefore $(c)a$ dominates $v$. We repeatedly apply the strong collapse removing each vertex $v$, strongly collapsing the Morse complex to $(c)a$.
\end{proof}

Recall that the \textbf{pure Morse complex} of $K$, denoted $\M_P(K)$ is the subcomplex of $\M(K)$ generated by the maximum facets of dimension $\dim(\M(K))$ where a facet $\sigma$ is \textbf{maximum} if $\dim(\sigma)\geq \dim(\tau)$ for every simplex $\tau$, i.e., the complex generated by the maximum gradient vector fields on $K$. A simplex is \textbf{maximal} if it is not contained in any other simplex. Ayala et al. \cite{A-F-Q-V-08} showed that $\M_P(T)$ is collapsible where $T$ is a tree.  We generalize this result by showing that $\M_P(T)$ is strongly collapsible.  The result of Ayala et al. then follows as an immediate corollary.

\begin{prop}\label{prop: pure complex tree sc}
Let $T$ be a tree.  Then $\M_P(T)$ is strongly collapsible.
\end{prop}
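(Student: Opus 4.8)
The plan is to find a single vertex of $\M_P(T)$ that dominates, after a suitable sequence of strong collapses, every remaining vertex, exactly as in the proof of Proposition \ref{prop: two leaves strongly collapsible}. First I would recall that a maximum gradient vector field on a tree $T$ with $n$ edges uses exactly $n-1$ edges (a maximum matching in the Hasse diagram leaves exactly one critical vertex, the ``root''), so the vertices of $\M_P(T)$ are the primitive vector fields $(v)w$ that extend to some maximum field, i.e. those of index $1$, and every facet of $\M_P(T)$ has dimension $n-2$. Next I would fix a leaf $a$ of $T$ with unique neighbor $b$ — which exists since $T$ is a tree — and consider the two vertices $(a)b$ and $(b)a$ of $\M_P(T)$, which are incompatible with each other. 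The key observation is that $(b)a$ appears in every maximum gradient vector field that does not pair $a$ with $ab$ in the ``other direction'': if a maximum field $f$ does not contain $(a)b$, then since $a$ has degree $1$ the edge $ab$ must still be matched (otherwise we could match it and exceed maximality, or the field would not be maximum), forcing $(b)a\in f$. Hence every facet $\sigma$ of $(a)b$'s complement, and in fact every facet not containing $(a)b$, contains $(b)a$; this shows $(b)a$ dominates $(a)b$ in $\M_P(T)$, so we may perform the elementary strong collapse removing $(a)b$.

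After removing $(a)b$, I claim $(b)a$ dominates every remaining vertex of $\M_P(T)$: take any vertex $v\neq (a)b$ and any facet $\tau$ of $\M_P(T)-\{(a)b\}$ containing $v$; the only vertex incompatible with $(b)a$ is $(a)b$ (every other primitive field either shares no simplex with $(b)a$ or, if it involves $a$ or $b$, must still be compatible once we note $\dim$ and incidence constraints — this is the routine incompatibility bookkeeping, identical in spirit to Lemma \ref{lem: index 1}'s case analysis), and $(a)b$ has been removed, so $(b)a$ is compatible with all of $\tau$, and since $\tau$ is maximal, $(b)a\in\tau$. We then strongly collapse $\M_P(T)$ onto the single vertex $(b)a$, proving strong collapsibility.

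The main obstacle I anticipate is verifying carefully that in $\M_P(T)$ — as opposed to the full Morse complex — the vertex $(b)a$ really is incompatible with no vertex other than $(a)b$, and that after deleting $(a)b$ there is no second obstruction vertex lurking. The subtlety is purely that restricting to \emph{maximum} fields could in principle destroy the ``only $(a)b$ is bad'' property if some field containing $(b)a$ together with another candidate vertex failed to be maximum; I would rule this out by observing that $a$ being a leaf means the pairing of $a$ is essentially decoupled from the rest of the tree, so any maximum field on $T-\{a,ab\}$ together with $(b)a$ (if $b$ is not otherwise matched) or with $a,ab$ critical (if $b$ is matched elsewhere) still has the right size; propagating this shows every facet of $\M_P(T)$ either contains $(a)b$ or contains $(b)a$. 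Once that lemma-level fact is nailed down, the two strong collapses go through verbatim as above. The corollary recovering the result of Ayala et al.\ \cite{A-F-Q-V-08} is then immediate from the remark that strongly collapsible implies collapsible.
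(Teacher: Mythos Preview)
There is a genuine gap: you have interchanged the roles of $(a)b$ and $(b)a$, and the argument does not survive the swap. Recall that $(a)b=(a,ab)$; since $a$ is a leaf, the only primitive field incompatible with $(a)b$ is $(b)a$. By contrast $(b)a=(b,ab)$ is also incompatible with $(b)c$ for every other neighbor $c$ of $b$, so your assertion that ``the only vertex incompatible with $(b)a$ is $(a)b$'' is false once $T$ has more than one edge. More directly, two incompatible vertices can never dominate one another---no facet contains both---so ``$(b)a$ dominates $(a)b$'' is impossible on its face; your (correct) observation that every maximum field contains one of $(a)b,(b)a$ does not imply it. The second step fails as well: after deleting $(a)b$, pick any maximum field $\tau\ni(a)b$ whose critical vertex is neither $a$ nor $b$ (such $\tau$ exist whenever $|V(T)|\ge 3$). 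Then $\tau-\{(a)b\}$ is maximal in $\M_P(T)-\{(a)b\}$, since the only primitive fields that could complete it to a maximum field are $(a)b$ and $(b)a$, and $b$ is already matched in $\tau$. This facet does not contain $(b)a$, so $(b)a$ is not a cone point after your first collapse.

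The paper runs essentially your idea with the labels the right way round. Because $(b)a$ forces $a$ to be the unique critical vertex, and on a tree the maximum gradient vector field with a prescribed critical vertex is unique, $(b)a$ lies in exactly one facet of $\M_P(T)$; hence $(b)a$ is dominated by any other vertex of that facet, for instance $(c)b$ with $c\ne a$ a neighbor of $b$. After removing $(b)a$, it is $(a)b$ whose sole obstruction has disappeared, so $(a)b$ lies in every remaining facet and $\M_P(T)-\{(b)a\}$ is a cone. (Minor aside: a maximum field on a tree with $n$ edges has $n$ regular pairs---you yourself note it leaves exactly one critical vertex---so the facets of $\M_P(T)$ have dimension $n-1$, not $n-2$.)
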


\begin{proof}
 Let $T$ be a tree. By definition, $T$ has at least one leaf, say $\{a, ab\}$. We will show that $(b)a$ is dominated and that after removing $(b)a$ from $\M_P(T)$, then $(a)b$ dominates all remaining primitive gradient vector fields, and hence $\M_P(T)$ is strongly collapsible.
	
	    Let $bc$ be an edge incident with $b$, $c\neq a$. We claim that $(b)a$ is dominated by $(c)b$ in $\M_P(T)$.  Suppose $\sigma\in \M_P(T)$ is a facet containing $(b)a$.  Then $\sigma$ is a maximal gradient vector of $T$, and since $\sigma$ is in the pure Morse complex, $\sigma$ is also maximum.  Since $(b)a\in \sigma$ with $a$ a leaf, $\sigma$ is the only maximum gradient vector field containing $(b)a$ and thus is dominated by $(c)b$ (and in fact every primitive gradient vector field of $\sigma$).  Since $(b)a$ is dominated, we may perform a strong elementary collapse and remove it from $\M_P(T)$.
	
	  Now we claim $(a)b$ dominates all remaining primitive gradient vector fields. Note that since $(b)a\not \in \M_P(T)-\{(b)a\}$, $(a)b$ is compatible with  $(\alpha)\beta\in \M_P(T)-\{(b)a\}$.   Thus $(a)b$ dominates $(\alpha)\beta$ for all other $(\alpha)\beta\in \M_P(T)-\{(b)a\}$ so that $\M_P(T)-\{(b)a\}$ is a cone and thus strongly collapsible.
\end{proof}

We then recover the result mentioned above.

\begin{cor} Let $T$ be a tree.  Then $\M_P(T)$ is collapsible.
\end{cor}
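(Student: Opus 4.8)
The plan is to deduce this corollary directly from Proposition \ref{prop: pure complex tree sc} together with the basic relationship between strong collapsibility and ordinary collapsibility. Recall that by the remark following the definition of strong collapsibility, an elementary strong collapse $K \searrow\searrow K - \{v'\}$ can be realized as a sequence of Forman-style elementary collapses; consequently any strongly collapsible complex is collapsible.

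First I would invoke Proposition \ref{prop: pure complex tree sc} to conclude that $\M_P(T)$ is strongly collapsible. Then I would simply note that strong collapsibility implies collapsibility, as recorded in the remark above, so $\M_P(T)$ is collapsible. That is the entire argument: the corollary is an immediate weakening of the proposition.

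Since there is essentially no obstacle here, the only thing worth being careful about is to state clearly that we are using the implication ``strongly collapsible $\Rightarrow$ collapsible,'' which is exactly the content of the remark following the definition of strong homotopy type. A one-line proof suffices:

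\begin{proof}
By Proposition \ref{prop: pure complex tree sc}, $\M_P(T)$ is strongly collapsible. Since every strong collapse is a sequence of collapses, strong collapsibility implies collapsibility. Hence $\M_P(T)$ is collapsible.
\end{proof}
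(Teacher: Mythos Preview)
Your proposal is correct and matches the paper's own treatment exactly: the paper states this corollary immediately after Proposition~\ref{prop: pure complex tree sc} without further argument, relying on the same implication (recorded in the remark after the definition of strong collapsibility) that strongly collapsible implies collapsible.
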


Let $P_t$ denote the path consisting of $t+1$ vertices $v_0, v_1, ..., v_{t}$ with $v_{i}$ adjacent to $v_{i+1}$ for $0 \leq i \leq t-1$. We slightly strengthen Kozlov's computation \cite[p. 119]{Kozlov99} by showing that a path on $3n$ vertices is strongly collapsible. First a lemma that will prove useful.

	\begin{lem} \label{lem: dominate}
	    Let $K$ be a simplicial complex with leaf $\{a, ab\}$ and $c$ a neighbor of $b$ not equal to $a$. Then $(b)c$ is dominated in $\mathcal{M}(K)$ by $(a)b$.
	\end{lem}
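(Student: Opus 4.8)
The plan is to mimic the first step of the proof of Proposition \ref{prop: pure complex tree sc}, but now working in the full Morse complex $\M(K)$ rather than in the pure subcomplex, so the argument must be purely local and combinatorial. I want to show that every facet $\sigma$ of $\M(K)$ containing the vertex $(b)c$ also contains the vertex $(a)b$. First I would observe that $(a)b$ is a genuine vertex of $\M(K)$: since $\{a,ab\}$ is a leaf of $K$, the edge $ab$ exists, so $(a,ab)$ is a primitive gradient vector field. Likewise $(b)c$ is a vertex because $bc$ is an edge of $K$. The key incompatibility to record is which vertices of $\M(K)$ are \emph{not} compatible with $(a)b$: a primitive gradient vector field $(\alpha)\beta$ fails to be compatible with $(a,ab)$ exactly when it shares a simplex, i.e. when $\alpha\in\{a,ab\}$ or $\beta\in\{a,ab\}$. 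Because $a$ is a leaf, the only simplices of $K$ containing $a$ are $a$ itself and $ab$, so the only primitive gradient vector fields involving $a$ or $ab$ are $(a)b$ itself and $(b)a$. Hence the \emph{only} vertex of $\M(K)$ incompatible with $(a)b$ is $(b)a$.

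Now let $\sigma$ be any facet of $\M(K)$ with $(b)c\in\sigma$. I claim $(b)a\notin\sigma$. Indeed, $(b)a$ and $(b)c$ both pair the vertex $b$, so they are incompatible (both contain the $0$-simplex $b$), and therefore cannot lie in a common simplex of $\M(K)$; in particular they cannot both lie in $\sigma$. Since $(b)a$ is the unique vertex of $\M(K)$ incompatible with $(a)b$, and $(b)a\notin\sigma$, every vertex of $\sigma$ is compatible with $(a)b$; equivalently $\sigma\cup\{(a)b\}$ is again a gradient vector field, hence a simplex of $\M(K)$. By maximality of the facet $\sigma$ we conclude $(a)b\in\sigma$. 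As $\sigma$ was an arbitrary facet containing $(b)c$, this shows $(a)b$ dominates $(b)c$, which is exactly the statement of the lemma.

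The only real subtlety is the claim that the unique vertex of $\M(K)$ incompatible with $(a)b$ is $(b)a$ — this is where the leaf hypothesis does all the work, and I should spell it out carefully: incompatibility of two primitive gradient vector fields means either they share a simplex or their union contains a non-trivial closed $V$-path, and for primitives the latter cannot happen with distinct pairs of the form under consideration, so incompatibility reduces to sharing a cell; then the fact that $a$ has degree $1$ pins down the list of offending primitives to just $(b)a$. Everything else is a direct application of the definition of domination together with maximality of facets, so I do not anticipate further obstacles.
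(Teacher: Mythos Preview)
Your proof is correct and follows essentially the same route as the paper's: both argue that any facet containing $(b)c$ cannot contain $(b)a$, and then use the leaf hypothesis to force $(a)b$ into that facet by maximality. The paper's version is terser, but your explicit identification of $(b)a$ as the \emph{unique} primitive incompatible with $(a)b$ is exactly the content of the paper's phrase ``since $(b)a$ is not compatible with $(b)c$ and $\{a,ab\}$ is a leaf, $(a)b$ must be in any maximal gradient vector field containing $(b)c$.''
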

	
	\begin{proof}
	Consider any facet of $(b)c$ in $\mathcal{M}(K)$. A facet of $\mathcal{M}(K)$ is a maximal gradient vector field on $K$, and since $(b)a$ is not compatible with $(b)c$ and $\{a,ab\}$ is a leaf, $(a)b$ must be in any maximal gradient vector field containing $(b)c$. Thus $(a)b$ dominates $(b)c$ in $\mathcal{M}(K)$.
	\end{proof}

\begin{prop}\label{prop: path strong collapse} Let $P_{3n-1}$ be the path on $3n$ vertices, $n\geq 1$.
    Then $\M(P_t)\searrow\searrow *$
\end{prop}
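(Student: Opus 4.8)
The plan is to prove the statement by induction on $n$, using the explicit leaf structure of $P_{3n-1}$ together with Lemma \ref{lem: dominate} as the workhorse. For the base case $n=1$ we must show $\M(P_2)\searrow\searrow *$; since $P_2$ is the path on three vertices $v_0v_1v_2$, its Morse complex was computed in Example \ref{complex of P_3} to be a path on four vertices (hence a tree), and Proposition \ref{prop: pure complex tree sc} does not quite apply to the full Morse complex, so instead I would observe directly from that picture that $(v_1,v_1v_2)$ dominates $(v_0,v_0v_1)$, perform the elementary strong collapse, and note that the remaining complex is a cone with apex $(v_2,v_1v_2)$, hence strongly collapsible. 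Alternatively, the base case is just an instance of Proposition \ref{prop: two leaves strongly collapsible} after noticing $P_2$ has two leaves sharing the vertex $v_1$ — but $P_{3n-1}$ for $n>1$ does not have two leaves sharing a vertex, so that shortcut will not drive the induction.

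For the inductive step, label the vertices of $P_{3n-1}$ as $v_0,v_1,\dots,v_{3n-1}$, so that $\{v_0, v_0v_1\}$ is a leaf and $v_1$ has the unique other neighbor $v_2$. The idea is to peel off the first three vertices. By Lemma \ref{lem: dominate} applied with $a=v_0$, $b=v_1$, $c=v_2$, the vertex $(v_1)v_2$ of $\M(P_{3n-1})$ is dominated by $(v_0)v_1$, so we may strongly collapse it away. The hope is then that, after this collapse, $(v_0)v_1$ — which is now incompatible only with vertices that have already been removed or that involve $v_0$ or $v_1$ in a conflicting way — dominates all remaining primitive gradient vector fields \emph{that involve $v_0, v_1$, or $v_2$ at all}, allowing us to strongly collapse the entire "first segment" and reduce to $\M(P_{3(n-1)-1})$ on the vertices $v_3,\dots,v_{3n-1}$. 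Concretely, after removing $(v_1)v_2$, the primitive vector fields still present that touch $v_0,v_1,v_2$ are $(v_0)v_1$, $(v_1)v_0$, $(v_2)v_1$, $(v_2)v_3$, $(v_1v_2$-type pairings do not arise since $P_{3n-1}$ is a graph); one checks that each of these is compatible with $(v_0)v_1$ except $(v_1)v_0$, and that $(v_1)v_0$ is in turn dominated by $(v_2)v_1$ in the collapsed complex, so one sequences the collapses: first remove $(v_1)v_2$, then $(v_1)v_0$, then $(v_2)v_1$, then $(v_2)v_3$, at which point $v_0,v_1,v_2$ no longer appear in any remaining vector field. The resulting complex is exactly $\M(P_{3n-1}')$ where $P_{3n-1}'$ is the path $v_3 v_4\cdots v_{3n-1}$ on $3(n-1)$ vertices, and the inductive hypothesis finishes the argument. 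One must double-check that "the Morse complex of a full subpath sits as a strong-collapse retract" is literally what the sequence of collapses yields — i.e. that after removing every vertex of $\M(P_{3n-1})$ mentioning $v_0,v_1,v_2$, what remains is isomorphic to $\M(P_{3(n-1)-1})$, not merely homotopy equivalent to it.

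The main obstacle I anticipate is the bookkeeping in the inductive step: verifying that the four (or however many) collapses can be performed \emph{in sequence}, each time checking that the vertex being removed is genuinely dominated in the \emph{current} complex (domination can be created or destroyed by previous collapses), and that no facet of a surviving vertex secretly requires one of the vertices we are trying to eliminate. The reason the hypothesis "$3n$ vertices" appears — as opposed to an arbitrary path — is presumably that a leftover of one or two vertices at the end of the peeling process would leave a $\M(P_1)$ or $\M(P_0)$ factor that obstructs the collapse (indeed $\M(P_1)$ is two points, not collapsible), so the induction must consume vertices exactly three at a time and terminate cleanly at $P_2$; I would make sure the peeling genuinely removes three vertices per stage and that the count $3n$ is what guarantees we land on the base case rather than on a bad residue. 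A secondary subtlety is that some of the claimed dominations rely on $\{v_0,v_0v_1\}$ being a leaf of the \emph{whole} path, which remains true at each stage because $v_3$ becomes the new leaf-support only after $v_0,v_1,v_2$ are fully excised; I would state this carefully so the induction is applied to an honest path, not to $P_{3n-1}$ with some vertices deleted from the base but still present in spirit.
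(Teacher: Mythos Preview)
Your inductive plan contains a genuine gap, and it is not merely bookkeeping. The key claimed domination is false: after removing $(v_1)v_2$, the vertex $(v_1)v_0$ is \emph{not} dominated by $(v_2)v_1$, because $(v_2)v_1$ remains incompatible with $(v_2)v_3$, and for instance $\{(v_1)v_0,\,(v_2)v_3,\,(v_4)v_3,\,(v_5)v_4,\dots\}$ is a facet of $(v_1)v_0$ in the collapsed complex that avoids $(v_2)v_1$. You also undercount what must be removed: to land exactly on $\M(P_{3(n-1)-1})$ on $v_3,\dots,v_{3n-1}$ you must delete all six of $(v_0)v_1,(v_1)v_0,(v_1)v_2,(v_2)v_1,(v_2)v_3,(v_3)v_2$, not four.

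Worse, the peel-off-three strategy cannot be salvaged by reordering. Among those six, only $(v_1)v_2$ is dominated in $\M(P_{3n-1})$; after removing it, only $(v_3)v_2$ becomes dominated (by $(v_2)v_1$); once that too is gone, each of the pairs $\{(v_0)v_1,(v_1)v_0\}$ and $\{(v_2)v_1,(v_2)v_3\}$ consists of two mutually incompatible vertices that are otherwise compatible with everything remaining, so every facet meets each pair in exactly one member and none of the four surviving target vertices is dominated. The only way to continue collapsing is to remove $(v_4)v_5$ (now dominated by $(v_3)v_4$), which pushes the process \emph{down the path} rather than excising the first block. Since a strong collapse never reintroduces vertices, there is no strong collapse from $\M(P_{3n-1})$ to $\M(P_{3(n-1)-1})$, and the induction breaks.

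The paper's argument is exactly this propagating sweep, carried out directly without induction: remove $(v_1)v_2$, then $(v_3)v_2$, then $(v_4)v_5$, then $(v_6)v_5$, and so on along the entire path; after the last such removal the endpoint vertex $(v_{3n-1})v_{3n-2}$ is compatible with every survivor and hence is a cone apex, so the remaining complex strongly collapses to it. The hypothesis that the path has $3n$ vertices is precisely what makes the sweep terminate at a cone point rather than leaving a residual $S^0$ pair.
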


\begin{proof}
    By Lemma \ref{lem: dominate},  $(v_1)v_2$ dominates $(v_2)v_3$. After removing $(v_2)v_3$, we see that  $(v_3)v_2$ dominates $(v_4)v_3$, and so we remove $(v_4)v_3$. Continuing in this manner, we see that $(v_{3k-2})v_{3k-1}$ dominates $(v_{3k-1})v_{3k}$ for all $1\leq k \leq n$, and $(v_{3k})v_{3k-1}$ dominates $(v_{3k+1})v_{3k}$ for for all $1\leq k <n$. Hence we may remove each of these primitive gradient vector fields.

    Now the last primitive gradient vector field removed is $(v_{3n-1})v_{3n}$  since it was dominated by $(v_{3n-2})v_{3n-1}$. We now claim that $(v_{3n})v_{3n-1}$  dominates every remaining vertex. To see this, observe that because $(v_{3n-1})v_{3n}$ has been removed, $(v_{3n})v_{3n-1}$ is compatible with all remaining vertices $(v_i)v_j$, and no $(v_i)v_j$ can exist in a facet of the remaining Morse complex without $(v_{3n})v_{3n-1}$. We remove all $(v_i)v_j$ until we are only left with $(v_{3n})v_{3n-1}$. Thus $\mathcal{M}(P_{3n-1})$ is strongly collapsible.
\end{proof}

\section{Morse complex of the disjoint union}\label{sec: Morse complex of the join}

Before using strong collapses to compute the homotopy type of the Morse complex of some other families of graphs, we need a result, interesting in its own right, about the Morse complex of a disjoint union. This result will be used in Section \ref{sec: Morse complex of some families of graphs} as well as Section \ref{sec: Automorphism group of the Morse complex of a disconnected complex} where we investigate the automorphism group of the Morse complex of a disjoint union.

\begin{lem}\label{lem: subset Morse}
If $A\subseteq B$, then $\M(A)\subseteq \M(B)$.
\end{lem}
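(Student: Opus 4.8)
The plan is to show directly that every gradient vector field on $A$ remains a gradient vector field on $B$, which immediately gives the containment of Morse complexes at the level of simplices (and hence as simplicial complexes). First I would unpack what $A \subseteq B$ means for simplicial complexes: every simplex of $A$ is a simplex of $B$, and the face relation is inherited. Consequently, if $(\sigma^{(p)}, \tau^{(p+1)})$ is a regular pair in $A$ (so $\sigma < \tau$ in $A$), then $\sigma < \tau$ in $B$ as well, so it is a legitimate regular pair in $B$. Thus any discrete vector field $V$ on $A$ is also a discrete vector field on $B$: the pairing condition (each simplex in at most one pair) is a condition on the finite set $V$ itself and does not depend on the ambient complex.

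The only substantive point — and the step I expect to be the main (though still routine) obstacle — is verifying that a gradient vector field on $A$ has no nontrivial closed $V$-paths when viewed in $B$. Here the key observation is that a $V$-path $\alpha_0^{(p)}, \beta_0^{(p+1)}, \alpha_1^{(p)}, \ldots$ is defined entirely in terms of the pairs $(\alpha_i, \beta_i) \in V$ and the face relations $\beta_i > \alpha_{i+1}$. Since all the $\alpha_i$ and $\beta_i$ lie in $A$ (they appear in pairs of $V$, which is a vector field on $A$), and since the face relation in $B$ restricted to simplices of $A$ agrees with the face relation in $A$, a sequence is a $V$-path in $B$ if and only if it is a $V$-path in $A$. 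In particular $V$ has a nontrivial closed $V$-path in $B$ iff it has one in $A$; so if $V$ is a gradient vector field on $A$ it is one on $B$. This shows every vertex of $\M(A)$ is a vertex of $\M(B)$ (taking $|V| = 1$), and every $n$-simplex of $\M(A)$, being a gradient vector field with $n+1$ regular pairs all of whose faces still lie in $A$, is an $n$-simplex of $\M(B)$.

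Finally I would check closure under faces to confirm this is an inclusion of simplicial complexes rather than merely of vertex-and-simplex sets: if $f$ is a gradient vector field on $A$ and $f' \leq f$, then $f'$ is again a gradient vector field on $A$ (a subset of a vector field with no closed paths still has no closed paths), hence on $B$, so the faces of a simplex of $\M(A)$ already sit inside $\M(A)$ and map into $\M(B)$ compatibly. Therefore $\M(A) \subseteq \M(B)$ as simplicial complexes. The argument is essentially bookkeeping: the crux is simply that being a gradient vector field is an intrinsic property of the finite data of $V$ together with the face poset it touches, and passing to a larger complex changes neither.
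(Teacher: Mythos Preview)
Your proposal is correct and follows essentially the same approach as the paper: both argue that a gradient vector field on $A$ remains one on $B$, first at the level of vertices (primitive pairs) and then for higher simplices. Your version is simply more explicit about why no new closed $V$-paths can appear in $B$, a point the paper's proof handles implicitly by asserting that compatibility in $\M(A)$ forces compatibility in $\M(B)$.
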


\begin{proof}
Consider any primitive pair $(\sigma, \tau) \in V(\M(A))$ where $\sigma,\tau \in A$. Then we have $\sigma,\tau\in B$, thus $(\sigma, \tau) \in V(\M(B))$, so $V(\M(A)) \subseteq V(\M(B))$. Now consider any simplex $\sigma = \sigma_1\sigma_2\cdots \sigma_m \in \M(A)$. Since all of the vertices $\sigma_1, \sigma_2, \dots , \sigma_m$ are compatible in $\M(A)$ and are vertices in $\M(B)$, they must also be compatible in $\M(B)$. Therefore $\sigma \in \M(B)$.
\end{proof}

\begin{prop}\label{prop: morse union}
Let $K,L$ be connected simplicial complexes each with at least one edge.  Then $\M(K\sqcup L)= \M(K)\ast \M(L)$.
\end{prop}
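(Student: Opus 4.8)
The plan is to show equality of simplicial complexes by showing they have the same vertex set and the same simplices, exploiting the fact that a disjoint union $K \sqcup L$ has no simplices connecting $K$ to $L$, so every gradient vector field on $K \sqcup L$ splits as a disjoint union of a gradient vector field on $K$ and one on $L$. First I would establish the vertex sets agree. A primitive gradient vector field $(\sigma,\tau)$ on $K\sqcup L$ has $\sigma < \tau$, and since any simplex lies entirely in $K$ or entirely in $L$ (there are no mixed simplices), both $\sigma$ and $\tau$ lie in the same component; hence $V(\M(K\sqcup L)) = V(\M(K)) \sqcup V(\M(L))$, which is exactly the vertex set of $\M(K)\ast\M(L)$ by Definition \ref{defn: join}. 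Here I would invoke Lemma \ref{lem: subset Morse} with $K \subseteq K\sqcup L$ and $L\subseteq K\sqcup L$ to know $\M(K)$ and $\M(L)$ sit inside $\M(K\sqcup L)$ as subcomplexes.

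Next I would handle the simplices. Given a simplex of $\M(K\sqcup L)$, i.e.\ a gradient vector field $f$ on $K\sqcup L$ with regular pairs $\{f_0,\dots,f_n\}$, partition the $f_i$ according to which component they live in: $f = g \cup h$ where $g$ consists of the pairs inside $K$ and $h$ of the pairs inside $L$. I must check $g$ is a gradient vector field on $K$ (and symmetrically for $h$): it is a discrete vector field since each simplex of $K$ appears in at most one pair of $f$, hence at most one pair of $g$; and it has no nontrivial closed $V$-paths because any $V$-path of $g$ is a $V$-path of $f$ (a gradient path stays within one component, as consecutive simplices differ by a face or coface relation, which cannot cross components). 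So $g \in \M(K)$ and $h\in\M(L)$, and by the definition of the join $g \cup h \in \M(K)\ast\M(L)$ (with the convention that one of $g,h$ may be empty, i.e.\ a face of the form $\sigma$ or $\tau$ alone in Definition \ref{defn: join}). Conversely, a simplex of $\M(K)\ast\M(L)$ is $\sigma\cup\tau$ with $\sigma\in\M(K)$, $\tau\in\M(L)$; the union of their regular pairs is a discrete vector field on $K\sqcup L$ (disjoint simplex supports), and it is a gradient vector field since a closed $V$-path would have to lie in one component and thus give a closed $V$-path of $\sigma$ or of $\tau$. Hence $\sigma\cup\tau \in \M(K\sqcup L)$.

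The one place requiring care — and the step I expect to be the main (if modest) obstacle — is verifying that combining a gradient vector field on $K$ with one on $L$ cannot create a nontrivial closed $V$-path in $K\sqcup L$, and dually that restriction preserves the gradient (acyclicity) condition. The key observation that makes this routine is that a $V$-path $\alpha_0,\beta_0,\alpha_1,\dots$ has each $\beta_i$ a coface of $\alpha_i$ and each $\alpha_{i+1}$ a face of $\beta_i$, so all simplices in a single $V$-path lie in one connected component of $K\sqcup L$; therefore $V$-paths of $f = g\cup h$ are exactly the (disjoint) union of the $V$-paths of $g$ and those of $h$, and closedness is detected componentwise. The hypothesis that $K$ and $L$ each have at least one edge guarantees both $\M(K)$ and $\M(L)$ are nonempty, so the join is the honest join of two nonempty complexes and the stated equality is not vacuous; I would remark on this but it is not needed for the set-theoretic identity of the complexes. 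Assembling these inclusions in both directions gives $\M(K\sqcup L) = \M(K)\ast\M(L)$ as simplicial complexes.
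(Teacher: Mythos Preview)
Your proposal is correct and follows essentially the same approach as the paper: establish equality of vertex sets, then show both inclusions of simplices by splitting a gradient vector field on $K\sqcup L$ componentwise and, conversely, by combining one on $K$ with one on $L$, using that a $V$-path (equivalently, a cycle in the modified Hasse diagram, which is the language the paper uses) cannot cross between components. The paper invokes Lemma~\ref{lem: subset Morse} and the acyclic-matching formulation where you use $V$-paths directly, but the arguments are the same in substance.
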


\begin{proof}
Assume without loss of generality that $K$ and $L$ are disjoint.  We first claim that $V(\M(K\cup L)) =  V(\M(K)\ast \M(L))$. Notice $V(\M(K)\ast \M(L)) = V(\M(K)) \cup V(\M(L))$ since the join operation does not create or remove any vertices. Consider any pair $(\sigma,\tau) \in V(\M(K\cup L))$. Then since $K$ and $L$ are disjoint, we have $\sigma,\tau \in K$ or $\sigma, \tau \in L$. Therefore $(\sigma,\tau) \in V(\M(K))$ or $(\sigma,\tau)\in V(\M(L))$, so $(\sigma,\tau) \in V(\M(K))\cup V(\M(L))$. Thus $V(\M(K\cup L))\subseteq V(\M(K)) \cup V(\M(L))$.\\

Now consider any $(\alpha,\beta) \in V(\M(K)\ast \M(L))$. Then $(\alpha,\beta) \in V(\M(K)) $ or $(\alpha,\beta) \in V(\M(L))$. Without loss of generality suppose $(\alpha,\beta) \in V(\M(K)) $. By Lemma \ref{lem: subset Morse}, we have $(\alpha,\beta) \in V(\M(K\cup L))$. Thus $V(\M(K)) \cup V(\M(L))\subseteq V(\M(K\cup L))$, so that $V(\M(K)) \cup V(\M(L))= V(\M(K\cup L))$. \\

To show that $\M(K \cup L) = \M(K) \ast \M(L)$, consider any simplex $\sigma \in \M(K \cup L)$. Write $\sigma = \alpha\cup\beta$, where $\alpha = \alpha_1\alpha_2\cdots \alpha_a$ and $\beta = \beta_1\beta_2\cdots \beta_b$, in which each $\alpha_i = (\sigma_i, \tau_i)$ where $\sigma_i,\tau_i \in K$ for  $i=1, 2 \dots a$, and $\beta_j = (\gamma_j, \delta_j)$ where $\gamma_j,\delta_j \in L$ for $j=1, 2, \dots b$. Notice that $\alpha\in \M(K \cup L)$. Thus $\alpha$ is also a gradient vector field of $K\cup L$. Moreover, since all $\alpha_i\in \alpha$ are pairs of simplices of $K$, this gradient vector field consists solely of primitive gradient vector fields in $K$. Thus $\alpha\in \M(K)$. By the same reasoning, $\beta \in \M(L)$. Thus it follows that $\sigma = \alpha \cup \beta \in \M(K)\ast \M(L)$. Hence $\M(K \cup L) \subseteq\M(K) \ast \M(L).$\\

Now suppose that $\tau \in \M(K) \ast \M(L)$. Since $\tau \in \M(K) \ast \M(L)$, write $\tau = a \cup b$, for simplices $a \in \M(K)$ and $b \in \M(L)$. This implies that all vertices in $a$ are compatible with each other, and similarly for $b$. Since $K \cap L = \emptyset$, we know that in $M(K\cup L)$, every vertex in $V(K)$ is compatible with every vertex in $V(L)$. It follows that all vertices in $a\cup b$ are pairwise compatible in $M(K \cup L)$. It remains to show that $a\cup b$ does not correspond to a cyclic matching of the induced directed Hasse diagram $\HH(K\cup L)$. Since $K \cap L = \emptyset$, any cycle in $\HH(K\cup L)$ must be contained entirely in its subgraphs $\HH(K)$ or $\HH(L)$. This would imply at least one of $a$ or $b$ corresponds to a cyclic matching of $\HH(K)$ or $\HH(L)$, respectively. However, since $a \in \M(K)$ and $b \in \M(L)$, we know that is not the case. We conclude that $a\cup b$ corresponds to an acyclic matching of $\HH(K\cup L)$, thus $a\cup b \in \M(K\cup L)$. Hence $ \M(K) \ast \M(L)\subseteq \M(K \cup L) $. We conclude that $\M(K \cup L) = \M(K) \ast \M(L)$.
\end{proof}

\begin{cor}\label{cor: suspension} Let $K$ be a simplicial complex.  Then $\M(K\sqcup P_1)\simeq \Sigma\M(K).$
\end{cor}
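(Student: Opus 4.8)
The plan is to apply Proposition \ref{prop: morse union} with $L = P_1$, the path on two vertices, i.e.\ a single edge $\{v,w\}$. First I would compute $\M(P_1)$ directly: an edge has exactly two primitive gradient vector fields, namely $(v,vw)$ and $(w,vw)$, and these are incompatible with each other since they share the $1$-simplex $vw$. Hence $\M(P_1)$ consists of exactly two isolated vertices, which is precisely the $0$-sphere $S^0 = \{v,w\}$ appearing in Definition \ref{defn: join} as the suspension factor. Then Proposition \ref{prop: morse union} gives $\M(K \sqcup P_1) = \M(K) * \M(P_1) = \M(K) * S^0 = \Sigma \M(K)$, which is the desired conclusion (in fact an equality of simplicial complexes, hence certainly a homotopy equivalence).

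One technical point to address is the hypothesis of Proposition \ref{prop: morse union}, which requires both $K$ and $L$ to be connected simplicial complexes with at least one edge. The complex $P_1$ is connected and has one edge, so that factor is fine. For $K$, if $K$ has at least one edge the proposition applies verbatim. If $K$ is a single vertex or is empty, the statement should be checked (or dismissed) separately: for instance if $K = P_0$ is a point then $K \sqcup P_1 = P_1$ and $\M(P_1) = S^0 = \Sigma(\text{pt}) = \Sigma \M(K)$ still holds. I would either add the standing assumption that $K$ has an edge, or remark that the edge-free cases are trivial; since the paper's convention is that complexes are connected, the only genuinely excluded case is the single vertex, which I would handle in one line.

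I do not expect any serious obstacle here: the corollary is essentially an immediate specialization of Proposition \ref{prop: morse union} once one identifies $\M(P_1)$ with $S^0$. The only thing requiring a sentence of care is making sure the reader sees that the two-vertex complex $\{v,w\}$ with no higher simplices is exactly the $L = \{v,w\}$ used to define suspension in Definition \ref{defn: join}, so that $\M(K) * \M(P_1)$ literally equals $\Sigma \M(K)$.
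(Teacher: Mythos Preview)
Your proposal is correct and is exactly the argument the paper intends: the corollary is stated without proof immediately after Proposition \ref{prop: morse union}, and follows by specializing that proposition to $L=P_1$ together with the trivial computation $\M(P_1)\cong S^0$. Your extra care about the edge hypothesis and the degenerate single-vertex case is fine but more than the paper itself spells out.
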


\begin{example}
While the collection of Morse complexes is closed under joins, not every join is realized as a Morse complex. For example, let $K:=\{a,b,c,ab,bc\}$ and $L:=\{u,v,uv\}$ so that the join $K*L$ is given by

$$
\begin{tikzpicture}[scale=.8,
    decoration={markings,mark=at position 0.6 with {\arrow{triangle 60}}},
    ]

\filldraw[fill=black!30, draw=black] (0,-1)--(2,-2)--(4,-1)--(3,2)--(1,2)--cycle;

\node[inner sep=1pt, circle, fill=black] (a) at (0,-1) [draw] {};
\node[inner sep=1pt, circle, fill=black] (b) at (2,-2) [draw] {};
\node[inner sep=1pt, circle, fill=black] (c) at (4,-1) [draw] {};
\node[inner sep=1pt, circle, fill=black] (u) at (1,2) [draw] {};
\node[inner sep=1pt, circle, fill=black] (v) at (3,2) [draw] {};

\draw[-]  (a)--(b) node[midway, below] {};
\draw[-]  (b)--(c) node[midway, left] {};
\draw[-]  (u)--(v) node[midway, below] {};
\draw[-]  (u)--(a) node[midway, left] {};
\draw[-]  (u)--(b) node[midway, right] {};
\draw[-, dashed]  (u)--(c) node[midway, below] {};
\draw[-, dashed]  (v)--(a) node[midway, above] {};
\draw[-]  (v)--(b) node[midway, above] {};
\draw[-]  (v)--(c) node[midway, above] {};

\node[anchor = north ]  at (a) {{$a$}};
\node[anchor = north ]  at (b) {{$b$}};
\node[anchor = north ]  at (c) {{$c$}};
\node[anchor = south]  at (u) {{$u$}};
\node[anchor = south ]  at (v) {{$v$}};

\end{tikzpicture}
$$

Suppose $K*L=\M(N)$ for some simplicial complex $N$. If $N$ contains a $2$-simplex, then there are at least $9$ primitive gradient vector fields on $N$, hence at least $9$ vertices in $\M(N)$, a contradiction.  Hence $N$ must be a graph. But the number of primitive gradient vector fields on a graph is even, again a contradiction.  Thus $K*L$ is not the Morse complex of any simplicial complex.
\end{example}

In addition, there are simplicial complexes $K$ such that $\Sigma K \neq \M(L)$ for any $L$. A similar argument to the one above shows that $\Sigma \Delta^2\neq \M(K)$ for any simplicial complex $K$.

\section{Morse complex of some families of graphs}\label{sec: Morse complex of some families of graphs}

Our main goal in this section is to compute the homotopy type of the Morse complex of several families of graphs by strongly collapsing the Morse complex to the Morse complex of a disjoint union and applying Proposition \ref{prop: morse union}. This is in part accomplished through an interesting observation concerning the role of strong collapses in certain Morse complexes.  We begin with an example.

\subsection{The Morse complex of cycles wedge a leaf}\label{sec: The Morse complex of cycles wedge a leaf}

\begin{ex}\label{ex: cicle collapse}
Let $C_3$ be the cycle on three vertices
$$
\begin{tikzpicture}[decoration={markings,mark=at position 0.6 with {\arrow{triangle 60}}},baseline]

\node[inner sep=1pt, circle, fill=black] (a) at (0,0.866) [draw] {};
\node[inner sep=1pt, circle, fill=black] (b) at (1,0.866) [draw] {};
\node[inner sep=1pt, circle, fill=black] (c) at (0.5,0) [draw] {};
\draw[-] (a)--(b)--(c)--(a)--cycle;
\end{tikzpicture}
$$
Since $C_3$ contains no leaves, $\M(C_3)$ is not strongly collapsible.  However, if we attach a single leaf to any vertex, it can be shown that the resulting Morse complex strongly collapses to the Morse complex of the disjoint union of the path of length one and the path of length two which strongly collapses to a point.  Evidently, the sequence of strong collapses can be written as

$$\mathcal{M}\left(
\begin{tikzpicture}[decoration={markings,mark=at position 0.6 with {\arrow{triangle 60}}},baseline]

\node[inner sep=1pt, circle, fill=black] (a) at (0,0.866) [draw] {};
\node[inner sep=1pt, circle, fill=black] (b) at (1,0.866) [draw] {};
\node[inner sep=1pt, circle, fill=black] (c) at (0.5,0) [draw] {};
\node[inner sep=1pt, circle, fill=black] (d) at (0.5,-0.866) [draw] {};
\draw[-] (a)--(b)--(c)--(a)--cycle;
\draw[-] (c)--(d);
\end{tikzpicture}
\right)
\searrow\searrow
\mathcal{M}\left(
\begin{tikzpicture}[decoration={markings,mark=at position 0.6 with {\arrow{triangle 60}}},baseline=0.5cm]
\node[inner sep=1pt, circle, fill=black] (a) at (0,1) [draw] {};
\node[inner sep=1pt, circle, fill=black] (b) at (1,1) [draw] {};
\node[inner sep=1pt, circle, fill=black] (c) at (2,1) [draw] {};
\node[inner sep=1pt, circle, fill=black] (d) at (0.5,0) [draw] {};
\node[inner sep=1pt, circle, fill=black] (e) at (1.5,0) [draw] {};
\draw[-] (a)--(b)--(c);
\draw[-] (d)--(e);
\end{tikzpicture}
\right)
\searrow\searrow
\begin{tikzpicture}[decoration={markings,mark=at position 0.6 with {\arrow{triangle 60}}},baseline=0.5cm]
\node[inner sep=1pt, circle, fill=black] (a) at (2,0) [draw] {};
\end{tikzpicture}
$$


\end{ex}

We will prove in Proposition \ref{prop: cycle collapse} that the Morse complex of a cycle wedged with a leaf strongly collapses to the Morse complex of a disjoint union of paths. This, along with other results, will be used in Theorem \ref{thm: morse of cycle and leaf} to prove that

    \begin{gather*}
       \mathcal{M}(C_n\vee \ell) \simeq  {\begin{cases}
                                * &\text{if } n=3k \\
                                S^{2k}  &\text{if } n = 3k+1 \\
                                S^{2k+1} &\text{if } n=3k+2.\\
                                \end{cases}
                                }
    \end{gather*}
where $\ell$ is a path of length 1.

\begin{defn}
Let $\mathbb{P}$ be the set of all (finite) posets, and $\mathbb{K}$ be the set of all simplicial complexes. Define a function $f\colon \mathbb{P} \to \mathbb{K}$ as follows: for each $P \in \mathbb{P}$, construct a simplicial complex $f(P)$ whose vertex set is the edge set of $P$. Then let $\sigma = e_1e_2 \cdots e_k$ be a simplex of $f(P)$ if and only if the edges $e_1, e_2, \cdots e_k$ oriented upward and all other edges oriented downward form an acyclic matching of $P$.
\end{defn}

\begin{rem}\label{rem: general Hasse} Note that for any simplicial complex $K$, $\M(K) \simeq f(\HH(K))$. Our definition thus generalizes the notion of taking the Morse complex to degenerate Hasse diagrams. We will similarly call $f(P)$ the Morse complex of the poset $P$.
\end{rem}

Given Remark \ref{rem: general Hasse} and Proposition \ref{prop: morse union}, we also have the following.

\begin{cor}\label{cor: f join}  Let $A,B$ be posets.  Then $f(\HH(A)\sqcup \HH(B)) \simeq f(\HH(A))\ast f(\HH(B))$.
\end{cor}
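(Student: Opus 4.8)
The statement to prove is Corollary~\ref{cor: f join}: for posets $A$ and $B$, $f(\HH(A)\sqcup \HH(B)) \simeq f(\HH(A))\ast f(\HH(B))$.

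The plan is to reduce this directly to Proposition~\ref{prop: morse union} via Remark~\ref{rem: general Hasse}. The key observation is that $f$ applied to a Hasse diagram is, up to homotopy equivalence, just the Morse complex: $\M(K)\simeq f(\HH(K))$. So I would first unwind what $\HH(A)\sqcup \HH(B)$ means — it is the directed Hasse diagram of the disjoint union poset $A\sqcup B$, i.e. $\HH(A\sqcup B)=\HH(A)\sqcup\HH(B)$ as directed graphs, since no order relations exist between elements of $A$ and elements of $B$. An acyclic matching on $\HH(A)\sqcup\HH(B)$ is exactly a pair consisting of an acyclic matching on $\HH(A)$ and one on $\HH(B)$ (any matched edge lies in one component, and any cycle lies in one component), so $f(\HH(A)\sqcup\HH(B))$ decomposes as the join $f(\HH(A))\ast f(\HH(B))$ by literally the same combinatorial argument used in the proof of Proposition~\ref{prop: morse union} — that proof only uses that the Hasse diagram splits into two components with no edges between them, which is exactly our situation.

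Concretely, the steps in order: (1) State that $\HH(A)\sqcup\HH(B)$ is a directed graph that is the disjoint union of the directed graphs $\HH(A)$ and $\HH(B)$, with no arrows between the two parts. (2) Observe that the vertex set of $f(\HH(A)\sqcup\HH(B))$ is the (disjoint) union of the edge sets, hence equals $V(f(\HH(A)))\cup V(f(\HH(B)))=V(f(\HH(A))\ast f(\HH(B)))$. (3) A set of edges oriented upward forms an acyclic matching of $\HH(A)\sqcup\HH(B)$ if and only if its restriction to $\HH(A)$ is an acyclic matching of $\HH(A)$ and its restriction to $\HH(B)$ is an acyclic matching of $\HH(B)$ — the ``matching'' condition is checked vertex-by-vertex (and no vertex has neighbors in both parts), and the ``acyclic'' condition is checked cycle-by-cycle (and every cycle lives in one part). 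Moreover every pair $(\sigma_A,\sigma_B)$ of such matchings glues to one, since there are no edges between the parts to create a new conflict. This gives exactly the simplices of the join. (4) Conclude $f(\HH(A)\sqcup\HH(B))=f(\HH(A))\ast f(\HH(B))$ on the nose, and then pass to homotopy equivalence; alternatively, cite Remark~\ref{rem: general Hasse} together with Proposition~\ref{prop: morse union} directly if one is willing to treat degenerate Hasse diagrams as honest Morse complexes. In fact the cleanest writeup is: by Remark~\ref{rem: general Hasse}, $f(\HH(A)\sqcup\HH(B))\simeq \M(\text{complex with that Hasse diagram})$, and the proof of Proposition~\ref{prop: morse union} goes through verbatim for degenerate Hasse diagrams because it never uses that $\HH(K)$ comes from an actual simplicial complex — only that it splits as a disjoint union of two directed graphs.

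I do not expect a genuine obstacle here; the result is essentially a restatement of Proposition~\ref{prop: morse union} in the language of the function $f$. The one point requiring a line of care is making sure the proof of Proposition~\ref{prop: morse union} really is ``Hasse-diagram-intrinsic'' — i.e. that nothing in it secretly used the geometry of $K$ or $L$ beyond the fact that $\HH(K\sqcup L)$ is the disjoint union $\HH(K)\sqcup\HH(L)$ with no connecting arrows. Inspecting that proof, the argument about vertices, pairwise compatibility across the two parts, and cycles being confined to one part is entirely in terms of the directed Hasse diagram, so the generalization is immediate. Thus the corollary follows with no new ideas, and the writeup can be as short as a single paragraph invoking Remark~\ref{rem: general Hasse} and the disjoint-union argument from Proposition~\ref{prop: morse union}.
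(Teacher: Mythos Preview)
Your proposal is correct and matches the paper's approach: the corollary is stated immediately after Remark~\ref{rem: general Hasse} and Proposition~\ref{prop: morse union} with the single line ``Given Remark~\ref{rem: general Hasse} and Proposition~\ref{prop: morse union}, we also have the following,'' and your writeup simply unpacks why that citation suffices (namely, that the proof of Proposition~\ref{prop: morse union} is purely in terms of the disjoint Hasse diagrams). Your observation that one in fact obtains equality rather than mere homotopy equivalence is correct and slightly sharper than what the paper records.
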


It turns out that determining the behaviour of a strong collapse of $\M(K)$ is best seen by studying a modified version of the Hasse diagram of $K$.  In general, this is simply a poset, and not itself a Hasse diagram. Nevertheless, its study will allow us to say something about strong collapses in $\M(K)$.  We use $K\vee_v \ell$ to denote attaching a leaf $\ell$ to a vertex $v\in K$.  We use $K\vee \ell$ when there is no need to make reference to the vertex.

\begin{lem}\label{lem: hasse collapse1}
For any simplicial complex $K$ and vertex $v \in V(K)$, the Morse complex $\M(K \vee_v \ell)$ strongly collapses to $f((\HH(K)-v) \sqcup \HH(\ell))$.
\end{lem}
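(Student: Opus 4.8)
The plan is to imitate the bookkeeping in Proposition \ref{prop: path strong collapse} and Lemma \ref{lem: dominate}, but now keeping track of the leaf at $v$ carefully. Write the leaf as $\ell = \{v, vw\}$, where $w$ is the new vertex. The four primitive gradient vector fields that involve the edge $vw$ are $(w)v$, $(v)w$, $(vw)$-type critical cells (none, since $vw$ is an edge, so actually just $(w)v$ and $(v)w$), and in addition, every edge $vu$ of $K$ incident to $v$ gives $(v)u$ and $(u)v$. The first step is to identify which vertices of $\M(K\vee_v\ell)$ survive the strong collapse and to see that what remains is the Morse complex of the poset $(\HH(K)-v)\sqcup\HH(\ell)$. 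Here $\HH(\ell)$ is just a single edge of the Hasse diagram (two vertices $v, w$ of height $0$ joined to the single edge $vw$ of height $1$), and its Morse complex $f(\HH(\ell))$ is the two-point complex $\{(v)w, (w)v\}$, i.e.\ two isolated vertices. So the claim is really that $\M(K\vee_v\ell)$ strongly collapses onto the subcomplex spanned by (a) the primitive vectors not involving $v$ at all, i.e.\ $f(\HH(K)-v)$, together with (b) the two vectors $(v)w$ and $(w)v$, joined as in a disjoint union.

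First I would dispose of all primitive vectors of the form $(v)u$ or $(vu)$-with-$v$ as a vertex of the coface that are \emph{not} $(v)w$, and all vectors $(u)v$ with $u\neq w$, showing they can be strongly collapsed away. The mechanism is exactly Lemma \ref{lem: dominate}: since $\{v, vw\}$ is a leaf, the vector $(w)v$ is incompatible with $(v)w$ but with nothing else touching $w$, so $(w)v$ dominates $(v)u$ whenever $vu$ is an edge of $K$ (any maximal gradient field containing $(v)u$ cannot contain $(v)w$, hence must contain $(w)v$ by maximality) — this is the analogue of "$(a)b$ dominates $(b)c$". After removing all such $(v)u$, one checks that $(v)w$ then dominates every remaining vector that has $v$ as the source, i.e.\ the cofaces $(\sigma)v$-type vectors... more precisely, every remaining vertex $(\alpha,\beta)$ of $\M(K\vee_v\ell)$ with $\beta$ containing $v$ but $\beta\neq vw$: since the competing $(v)u$'s are gone, $(v)w$ is compatible with all of these, so they are each dominated by $(v)w$ and can be removed. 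One must be slightly careful about the order — remove the sources-at-$v$ first (dominated by $(w)v$), then the cofaces-at-$v$ (dominated by $(v)w$) — exactly paralleling the two-phase argument in Propositions \ref{prop: two leaves strongly collapsible} and \ref{prop: pure complex tree sc}.

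What is left after these removals is the subcomplex of $\M(K\vee_v\ell)$ spanned by $V(f(\HH(K)-v)) \cup \{(v)w, (w)v\}$, and I would finish by arguing that a simplex $\sigma$ lies in this subcomplex if and only if it is a simplex of $f((\HH(K)-v)\sqcup\HH(\ell))$: the forward direction uses that a surviving face either avoids $v$ (hence is a gradient field on $\HH(K)-v$, i.e.\ a simplex of $f(\HH(K)-v)$ — acyclicity is inherited) or else contains at most one of $(v)w,(w)v$ and is otherwise supported on $\HH(K)-v$; the reverse direction is compatibility plus the acyclicity observation from the proof of Proposition \ref{prop: morse union} that a matching on a disjoint union of posets is acyclic iff each piece is. The main obstacle I anticipate is \emph{not} the combinatorics of compatibility but the domination bookkeeping: making sure that at the moment one removes a given vector, the dominating vector genuinely still lies in \emph{every} remaining facet containing it — i.e.\ that no facet got "shortened" in a way that kills the domination. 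The safe route is to fix a linear order on the vectors being removed (all $(v)u$ with $vu\in K$, $u\neq w$, first, in any order, then the cofaces of $v$ other than $vw$, in any order) and verify the domination hypothesis is preserved at each stage using that the dominator $(w)v$ (resp.\ $(v)w$) is incompatible only with $(v)w$ (resp.\ $(w)v$) among \emph{all} vertices of $\M(K\vee_v\ell)$, a fact that does not change as vertices are deleted.
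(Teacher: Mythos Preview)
Your plan contains a genuine misreading of the target complex. You interpret $f(\HH(K)-v)$ as ``the primitive vectors not involving $v$ at all,'' but $\HH(K)-v$ means deleting only the single \emph{node} $v$ from the Hasse diagram. Since $v$ is a $0$-simplex, the only Hasse-edges incident to it are the edges $v\text{--}va_i$ to its cofaces $va_1,\ldots,va_k$. All other Hasse-edges survive: in particular $a_i\text{--}va_i$ (giving the vertex $(a_i)v$) and, if $K$ has higher simplices, edges like $va_i\text{--}va_ia_j$. Thus $f((\HH(K)-v)\sqcup\HH(\ell))$ has exactly the vertex set $V(\M(K\vee_v\ell))\setminus\{(v)a_1,\ldots,(v)a_k\}$, and the paper's proof consists of \emph{only} your Phase~1: remove the $k$ vertices $(v)a_i$, each dominated by $(w)v$ via Lemma~\ref{lem: dominate}, and observe that what remains is already the desired complex (a matching on $\HH(K\vee_v\ell)$ avoiding the pairs $(v,va_i)$ is the same thing as a matching on the subposet, and acyclicity is unchanged since any closed $V$-path through the node $v$ would have to use one of the deleted pairs or terminate at $w$).

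Your Phase~2 is therefore unnecessary, and the domination claim driving it is also false. You assert that after removing the $(v)a_i$'s, the vector $(v)w$ dominates every $(\alpha,\beta)$ with $v\in\beta$ and $\beta\neq vw$. Take $(\alpha,\beta)=(a_i,va_i)$: this is compatible with $(w)v$, so any maximal gradient field containing $(a_i,va_i)$ and $(w,vw)$ is a facet of $(a_i,va_i)$ that does \emph{not} contain $(v,vw)$. Hence $(v)w$ does not dominate $(a_i)v$. The slogan ``$(v)w$ is compatible with everything left, hence dominates'' fails precisely because $(w)v$ is still present; compatibility with a vertex is necessary but not sufficient for domination. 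Once you correct the interpretation of $\HH(K)-v$, the entire second phase disappears and your Phase~1 together with the identification in your final paragraph is the paper's proof.
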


\begin{proof}
Write $\ell=vw$ for some vertex $w$ and let  $a_1, a_2, \dots , a_k$ be the neighbors of $v$. By Lemma \ref{lem: dominate},  the vertex $(w,wv)$ dominates vertices $(v,va_1), (v,va_2), \dots (v,va_k)$, leading to $k$ strong collapses. In the Hasse diagram $\HH(K \vee \ell)$, this corresponds to a removal of the edges connecting node $v$ to nodes $va_1,va_2, \dots , va_k$. As these are all the edges in $K$ that $v$ is connected to, the Hasse diagram now consists of $\HH(K)$ with node $v$ removed, together with a second component consisting of the Hasse diagram of the leaf $vw$. The entire Hasse diagram is $(\HH(K)-v) \sqcup \HH(\ell)$. Therefore $\M(K \vee_v \ell) \searrow\searrow f((\HH(K)-v) \sqcup \HH(\ell))$.
\end{proof}

 Let $\pd^{n}$ be the boundary of the $n$-simplex on the vertices $\{v_0, v_1, \ldots, v_n\}$ and write $\delta:=v_0v_1\cdots v_{n}$.  Define the \textbf{reflection map} \cite{LinSco-19} $\pi_n=\pi\colon \pd^n \to \pd^n$ by $\pi(\sigma) := \delta - \sigma $. Note that the reflection map is not a simplicial map.

\begin{prop}\label{prop: morse collapse sphere} Let $v$ be a vertex of $\pd^n$. Then $\M(\pd^n \vee_v \ell) \searrow \searrow \M( (\pd^n - \pi(v))\sqcup \ell)$.
\end{prop}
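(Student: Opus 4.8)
The plan is to apply Lemma \ref{lem: hasse collapse1} to the simplicial complex $K = \pd^n$, which gives us immediately that $\M(\pd^n \vee_v \ell) \searrow\searrow f((\HH(\pd^n)-v) \sqcup \HH(\ell))$. Since $\HH(\ell)$ is literally the Hasse diagram of the edge $\ell = vw$, Remark \ref{rem: general Hasse} tells us $f(\HH(\ell)) \simeq \M(\ell)$, and more precisely $f$ of a genuine Hasse diagram is the Morse complex of that complex. So the entire content of the proposition reduces to the claim that $\HH(\pd^n) - v$, as a poset, has the same (or strong-homotopy-equivalent) Morse complex as $\HH(\pd^n - \pi(v))$, the Hasse diagram of the boundary of the $n$-simplex with the vertex $\pi(v)$ (the ``opposite face'' $\delta - v$, which in $\pd^n$ is a codimension-one face) deleted. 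First I would unwind what $\HH(\pd^n) - v$ is concretely: it is the Hasse diagram of all nonempty proper subsets of $\{v_0, \dots, v_n\}$, with the single node corresponding to the vertex $v$ (a minimal element) removed, together with all edges incident to it.

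The key step is to exhibit a poset isomorphism $\HH(\pd^n) - v \;\cong\; \HH(\pd^n - \pi(v))$. The right-hand side is the Hasse diagram of the complex $\pd^n - \pi(v)$, whose faces are exactly the proper subsets of $\{v_0,\dots,v_n\}$ not containing the vertex set $\pi(v) = \delta - v$; equivalently, subsets whose complement in $\delta$ is not contained in $\{v\}$, i.e. subsets $\sigma$ with $\delta - \sigma \not\subseteq \{v\}$. I would use the reflection map $\pi_n$ itself — or rather an order-reversing version of it — as the candidate bijection on vertices-of-the-Hasse-diagram: complementation $\sigma \mapsto \delta - \sigma$ is an order-reversing bijection on the face poset of $\Delta^n$ restricted to proper nonempty subsets, hence an isomorphism $\HH(\pd^n) \to \HH(\pd^n)^{\mathrm{op}}$. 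Under this complementation, the node $v$ (a vertex, i.e. a $0$-dimensional face, hence minimal in $\HH(\pd^n)$) maps to the node $\delta - v = \pi(v)$ (a codimension-one face, hence maximal), and removing $v$ on the left corresponds to removing $\pi(v)$ on the right. I then need the observation that $f$ of a poset and $f$ of its opposite poset coincide (a downward orientation of an edge in $P$ is an upward orientation in $P^{\mathrm{op}}$, and acyclicity is preserved under reversing all edges), so passing to the opposite poset does not change the Morse complex. Chaining these: $f(\HH(\pd^n) - v) = f((\HH(\pd^n))^{\mathrm{op}} - \pi(v)) = f(\HH(\pd^n)^{\mathrm{op}} - \pi(v)) = f(\HH(\pd^n - \pi(v)))$ — where the last equality uses that deleting the maximal node $\pi(v)$ from $\HH(\pd^n)$ yields exactly $\HH(\pd^n - \pi(v))$ since $\pi(v)$ is a facet.

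Putting the pieces together, $\M(\pd^n \vee_v \ell) \searrow\searrow f((\HH(\pd^n)-v)\sqcup \HH(\ell))$, and since $f$ respects disjoint unions up to the join (Corollary \ref{cor: f join}) while strong collapses of a disjoint-union Morse complex can be performed componentwise, I can replace the first component $f(\HH(\pd^n) - v)$ by the isomorphic $f(\HH(\pd^n - \pi(v))) = \M(\pd^n - \pi(v))$ and the second by $\M(\ell)$, obtaining $f((\HH(\pd^n)-v)\sqcup \HH(\ell)) \cong \M(\pd^n - \pi(v)) \ast \M(\ell) = \M((\pd^n - \pi(v)) \sqcup \ell)$ by Proposition \ref{prop: morse union}. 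Hence $\M(\pd^n \vee_v \ell) \searrow\searrow \M((\pd^n - \pi(v)) \sqcup \ell)$, as desired.

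The main obstacle I anticipate is the bookkeeping around $f$ versus $\M$ on possibly-degenerate Hasse diagrams and making sure the poset isomorphism is genuinely well-defined: I must check carefully that $\HH(\pd^n) - v$ really does ``degenerate'' in the way Lemma \ref{lem: hasse collapse1} produces it (the node $v$ is removed together with its incident edges, and the resulting poset need not be the Hasse diagram of any simplicial complex a priori), and that complementation still sends this degenerate poset to the honest Hasse diagram $\HH(\pd^n - \pi(v))$. The subtle point is that $\pd^n - \pi(v)$ — the boundary of the $n$-simplex with one facet removed — is a genuine simplicial complex (it is an $(n-1)$-ball, the ``disk'' side of the sphere), so its Hasse diagram is honest; verifying the bijection on edge sets matches up (i.e. edges of $\HH(\pd^n)-v$ correspond under complementation precisely to edges of $\HH(\pd^n-\pi(v))$) is the one place where a short but careful argument is needed rather than a one-line appeal to an earlier result.
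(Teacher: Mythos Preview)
Your proposal is correct and follows essentially the same route as the paper: apply Lemma \ref{lem: hasse collapse1}, use the reflection map $\pi_n$ to identify $\HH(\pd^n)-v$ with $\HH(\pd^n)-\pi(v)=\HH(\pd^n-\pi(v))$, and finish via Corollary \ref{cor: f join} and Proposition \ref{prop: morse union}. You are in fact more careful than the paper on one point: you explicitly observe that $\pi_n$ is order-\emph{reversing} and justify the auxiliary fact $f(P)=f(P^{\mathrm{op}})$, whereas the paper simply asserts that $\pi_n$ carries one Hasse diagram isomorphically onto the other without addressing the reversal.
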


\begin{proof}
By Lemma \ref{lem: hasse collapse1},  $\M(\pd^n \vee_v \ell) \searrow \searrow f((\HH(\pd^n) - v) \sqcup \HH(\ell))$. By Corollary \ref{cor: f join}, we have $f((\HH(\pd^n) - v) \sqcup \HH(\ell)) \simeq f(\HH(\pd^n)-v) \ast f(\HH(\ell))$, and since $f(\HH(\ell))=\M(\ell)$ by Remark \ref{rem: general Hasse}, $f((\HH(\pd^n) - v) \sqcup \HH(\ell)) \simeq f(\HH(\pd^n)-v) \ast \M(\ell)$. The same argument shows that $\M( (\pd^n - \pi(v))\sqcup \ell) \searrow\searrow f(\HH( \pd^n) - \pi(v)) \ast \M(\ell)$. It thus suffices to show that $f(\HH(\pd^n)-v) \simeq f(\HH( \pd^n) - \pi(v))$.
Now $\pi_n$ is a bijection so that the Hasse diagram $\pi_n(\HH(\pd^n)-v)$ will be the Hasse diagram $\HH(\pd^n)$ with $\pi(v)$ removed. This is precisely $\HH(\pd^n)  - \pi(v)$. Therefore $\HH(\pd^n)-v \simeq \HH(\pd^n) - \pi(v)$, so $f(\HH(\pd^n)-v) \simeq f(\HH( \pd^n) - \pi(v))$.
\end{proof}

Recall that in Example \ref{ex: cicle collapse}, we saw that $\M(C_3\vee \ell)$ strongly collapses to the disjoint union of a two paths.  Equipped with Lemma \ref{lem: hasse collapse1}, we now show that this occurs for a cycle of any length.

\begin{prop}\label{prop: cycle collapse} Let $v$ be a vertex of $C_n$. Then  $\M(C_n \vee_v \ell) \searrow\searrow \M(P_{n-1} \sqcup \ell)$.
\end{prop}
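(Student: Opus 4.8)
The plan is to invoke Lemma~\ref{lem: hasse collapse1} and then identify the resulting poset by hand. Applying that lemma with $K = C_n$ yields
$$\M(C_n \vee_v \ell) \searrow\searrow f\big((\HH(C_n) - v) \sqcup \HH(\ell)\big),$$
so it is enough to show that the complex on the right is isomorphic to $\M(P_{n-1}\sqcup \ell)$.

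First I would unwind the poset $\HH(C_n) - v$. Write the vertices of $C_n$ as $v = v_0, v_1, \dots, v_{n-1}$ and its edges as $v_i v_{i+1}$ with indices read mod $n$. In $\HH(C_n)$ the node $v_0$ lies in exactly the two covering relations $v_0 < v_0 v_1$ and $v_0 < v_{n-1}v_0$, so deleting it leaves a poset whose Hasse diagram, regarded as a graph, is the path
$$(v_0v_1)\,-\,v_1\,-\,(v_1v_2)\,-\,v_2\,-\,\cdots\,-\,v_{n-1}\,-\,(v_{n-1}v_0)$$
on $2n-1$ nodes, the two end nodes (now of valence one) being maximal elements. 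In particular this Hasse diagram is a forest. Its underlying graph is a path on $2n-1$ nodes, hence is isomorphic to the underlying graph of $\HH(P_{n-1})$, the alternating path $u_0 - (u_0u_1) - u_1 - \cdots - (u_{n-2}u_{n-1}) - u_{n-1}$, which also has $2n-1$ nodes --- even though the two posets are not isomorphic, since in $\HH(C_n)-v$ the two degree-one end nodes are maximal while in $\HH(P_{n-1})$ they are minimal.

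Next I would record the elementary but crucial point that whenever the Hasse diagram of a poset $P$ is a forest, $f(P)$ is just the matching complex of that forest: a directed cycle in any orientation of the Hasse diagram would yield an undirected cycle in the underlying graph, so the acyclicity requirement in the definition of $f$ is automatically met, and a set of edges of $P$ oriented upward with the rest oriented downward is an acyclic matching precisely when it is a matching of the underlying graph. Hence $f(P)$ depends only on the isomorphism type of the underlying graph of the Hasse diagram of $P$, and in particular the distinction between maximal and minimal end nodes noted above is irrelevant. Since $(\HH(C_n)-v)\sqcup \HH(\ell)$ and $\HH(P_{n-1})\sqcup \HH(\ell) = \HH(P_{n-1}\sqcup \ell)$ are both forests with isomorphic underlying graphs --- a path on $2n-1$ nodes together with a path on $3$ nodes --- we obtain
$$f\big((\HH(C_n)-v)\sqcup \HH(\ell)\big) \;\cong\; f\big(\HH(P_{n-1}\sqcup \ell)\big) \;=\; \M(P_{n-1}\sqcup \ell),$$
the last equality by Remark~\ref{rem: general Hasse}. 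Composing this isomorphism with the strong collapse supplied by Lemma~\ref{lem: hasse collapse1} completes the proof.

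The only delicate point is the bookkeeping of the first step: checking that exactly the two covering relations at $v_0$ vanish and that the surviving Hasse diagram really is a path on $2n-1$ nodes, so that it matches $\HH(P_{n-1})$ and not some other graph. The conceptual heart is the second step --- that $f$ forgets the up/down orientation once the Hasse diagram is acyclic --- since this is exactly what lets us identify $f(\HH(C_n)-v)$ with $\M(P_{n-1})$ rather than with the Morse complex of something else. Alternatively, mirroring the proof of Proposition~\ref{prop: morse collapse sphere}, one could use Corollary~\ref{cor: f join} to reduce to proving $f(\HH(C_n)-v) \simeq f(\HH(P_{n-1}))$ and establish that via the graph isomorphism above (equivalently, by noting that $\HH(C_n)-v$ is the opposite poset of $\HH(P_{n-1})$ and that $f$ is unchanged under passing to the opposite poset).
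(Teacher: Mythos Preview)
Your argument is correct and follows the same route as the paper: invoke Lemma~\ref{lem: hasse collapse1} and then identify $f(\HH(C_n)-v)$ with $\M(P_{n-1})$. The paper handles the identification by simply asserting $\HH(C_n)-v \simeq \HH(P_{n-1})$ and passing to $f$, whereas you rightly observe that these two posets are \emph{not} isomorphic and supply the needed justification via the matching-complex-of-forests (equivalently, opposite-poset) observation --- so your treatment of this step is in fact more complete than the paper's own.
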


\begin{proof}
We follow a similar method to that of the proof of Proposition \ref{prop: morse collapse sphere}.  By Lemma \ref{lem: hasse collapse1}, we know that $\M(C_n \vee_v \ell) \searrow\searrow f((\HH(C_n)-v)\sqcup \HH(\ell))$.  We also have $f((\HH(C_n)-v)\sqcup \HH(\ell)) \simeq f(\HH(C_n)-v) \ast f(\HH(\ell)) \simeq f(\HH(C_n)-v)\ast \M(\ell)$. In addition, by Proposition \ref{prop: morse union} and Remark \ref{rem: general Hasse}, $\M(P_{n-1}\sqcup \ell) \simeq \M(P_{n-1})\ast \M(\ell)\simeq f(\HH(P_{n-1}))\ast \M(\ell)$. Observe that $\HH(C_n)-v\simeq \HH(P_{n-1})$, thus $f(\HH(P_{n-1})) \simeq f(\HH(C_n)-v)$.
\end{proof}

This next result is due to Kozlov.

\begin{prop}\label{prop: Kozolv}\cite{Kozlov99}
Let $P_{n-1}$ be a path on $n$ vertices.  Then

    \begin{gather*}
       \mathcal{M}(P_{n-1}) \simeq  {\begin{cases}
                                * &\text{if } n=3k \\
                                S^{2k-1}  &\text{if } n = 3k+1 \\
                                S^{2k} &\text{if } n=3k+2.\\
                                \end{cases}
                                }
    \end{gather*}

\end{prop}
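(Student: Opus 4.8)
The plan is to realize $\M(P_{n-1})$ as the matching complex of a path graph, establish a suspension recursion for such matching complexes, and finish with a short count; I would also remark that the case $n\equiv 0\pmod 3$ already follows from Proposition~\ref{prop: path strong collapse}. \emph{Step 1 (reduction).} Since $P_{n-1}$ is a tree it has no cycles, so no discrete vector field on $P_{n-1}$ can carry a non-trivial closed $V$-path (such a path would trace out a cycle of the graph); hence every discrete vector field on $P_{n-1}$ is automatically a gradient vector field. Therefore a set of primitive gradient vector fields spans a face of $\M(P_{n-1})$ exactly when the corresponding edges of $\HH(P_{n-1})$, regarded as an ordinary graph, form a matching. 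As a graph, $\HH(P_{n-1})$ is the path on the $2n-1$ nodes $v_0,\,v_0v_1,\,v_1,\,v_1v_2,\ldots,v_{n-1}$, so $\M(P_{n-1})\cong M(L_m)$ with $m:=2n-1$, where $M(L_m)$ denotes the matching complex of the path $L_m$ on $m$ vertices (the complex whose vertices are the edges of $L_m$ and whose faces are the matchings).

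\emph{Step 2 (the recursion).} I claim $M(L_m)\simeq \Sigma M(L_{m-3})$ for all $m\ge 3$, with base cases $M(L_0)=M(L_1)=\{\emptyset\}$ (empty realization, $\simeq S^{-1}$) and $M(L_2)=\mathrm{pt}$. Write $L_m=x_1x_2\cdots x_m$ and let $e$ be the vertex of $M(L_m)$ given by the edge $x_2x_3$. The deletion $M(L_m)\setminus e$ is the matching complex of the graph obtained from $L_m$ by deleting the edge $x_2x_3$, namely the disjoint union of the single edge $x_1x_2$ and the path $x_3x_4\cdots x_m$. Since the matching complex of a disjoint union of graphs is the simplicial join of the matching complexes (the same observation underlying Proposition~\ref{prop: morse union}), this deletion is $\mathrm{pt}\ast M(L_{m-2})$, which is a cone and hence contractible. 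The closed star $\overline{\mathrm{st}}(e)$ is a cone over its link, so it too is contractible, while $\mathrm{lk}(e)$ is exactly the matching complex of the path $x_4x_5\cdots x_m$, i.e.\ $M(L_{m-3})$. As $M(L_m)$ is the union of the two contractible subcomplexes $M(L_m)\setminus e$ and $\overline{\mathrm{st}}(e)$ glued along the subcomplex $\mathrm{lk}(e)$, and the inclusion of $\mathrm{lk}(e)$ is a cofibration, I conclude $M(L_m)\simeq \Sigma\,\mathrm{lk}(e)=\Sigma M(L_{m-3})$, using the unreduced suspension (so that $\Sigma\,\emptyset=S^0$).

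\emph{Step 3 (the count).} Iterating the recursion down to the base cases, and using that $\Sigma$ raises the dimension of a sphere by one and preserves contractibility, gives $M(L_m)\simeq \mathrm{pt}$ when $m\equiv 2\pmod 3$, $M(L_m)\simeq S^{\,m/3-1}$ when $m\equiv 0\pmod 3$, and $M(L_m)\simeq S^{\,(m-1)/3-1}$ when $m\equiv 1\pmod 3$. Substituting $m=2n-1$: $n=3k$ gives $m\equiv 2$, hence $\ast$; $n=3k+1$ gives $m=6k+1\equiv 1$, hence $S^{2k-1}$; and $n=3k+2$ gives $m=6k+3\equiv 0$, hence $S^{2k}$. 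This is exactly the stated list.

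The one genuinely delicate step is the choice of $e$ in Step 2: the argument works because $e$ is the \emph{second} edge of the path, which is precisely what forces the deletion to split off a single-edge (hence point) join factor and so become a cone; deleting an end edge or an interior edge would not contractibilize the deletion. Apart from that, the only care needed is in the degenerate small complexes $M(L_0),M(L_1),M(L_2)$ and in adopting the unreduced-suspension convention $\Sigma\,\emptyset=S^0$, so that the dimension bookkeeping is uniform across the three residue classes.
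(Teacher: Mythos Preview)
The paper does not give its own proof of this proposition; it is simply cited from Kozlov, so there is nothing in the paper to compare against. Your argument is correct and self-contained: the identification of $\M(P_{n-1})$ with the matching complex $M(L_{2n-1})$ of a path is valid because a tree supports no closed $V$-paths, the deletion/star decomposition at the second edge really does express $M(L_m)$ as a union of two contractible subcomplexes glued along $M(L_{m-3})$ (your observation that only the \emph{second} edge works here is exactly the point), and the residue-class bookkeeping in Step~3 is accurate. This recursion is in the spirit of Kozlov's original treatment and supplies what the present paper omits.
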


Combining Propositions \ref{prop: cycle collapse}, \ref{prop: morse union}, \ref{prop: Kozolv}, \ref{prop: Barmakthesis}, and Corollary \ref{cor: suspension} yield the following:

\begin{thm}\label{thm: morse of cycle and leaf} Let $C_n$ be a cycle of length $n\geq 3$.  Then
    \begin{gather*}
       \mathcal{M}(C_n\vee \ell) \simeq  {\begin{cases}
                                * &\text{if } n=3k \\
                                S^{2k}  &\text{if } n = 3k+1 \\
                                S^{2k+1} &\text{if } n=3k+2.\\
                                \end{cases}
                                }
    \end{gather*}
\end{thm}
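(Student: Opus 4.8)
The plan is to assemble Theorem \ref{thm: morse of cycle and leaf} entirely from the results already on the table, with the only real work being bookkeeping modulo $3$. First I would invoke Proposition \ref{prop: cycle collapse} to replace $\M(C_n \vee \ell)$ up to strong collapse (hence up to homotopy, by the remark that a strong collapse is a sequence of collapses) with $\M(P_{n-1} \sqcup \ell)$, where $\ell = P_1$. Then Proposition \ref{prop: morse union} (note $P_{n-1}$ is connected with at least one edge once $n \geq 3$, and $P_1$ is connected with an edge) gives $\M(P_{n-1} \sqcup \ell) = \M(P_{n-1}) \ast \M(P_1)$; equivalently one can phrase this via Corollary \ref{cor: suspension} as $\M(P_{n-1} \sqcup P_1) \simeq \Sigma \M(P_{n-1})$, since $\M(P_1)$ is just two points. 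So the whole computation reduces to $\M(C_n \vee \ell) \simeq \Sigma \M(P_{n-1})$.

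Next I would feed in Kozlov's computation, Proposition \ref{prop: Kozolv}, and take a suspension. Writing $n = 3k$, $n = 3k+1$, or $n = 3k+2$: in the first case $\M(P_{n-1}) \simeq \ast$, and $\Sigma(\ast) \simeq \ast$ — here I would be a little careful and note that "$\simeq \ast$" on the nose would give $\Sigma \ast = S^0$, so the cleaner route is to use Proposition \ref{prop: Barmakthesis} together with Proposition \ref{prop: path strong collapse}: when $n = 3k$, the path $P_{n-1} = P_{3k-1}$ is strongly collapsible, hence by Proposition \ref{prop: Barmakthesis} the join $\M(P_{3k-1}) \ast \M(P_1)$ is strongly collapsible, hence contractible, giving $\M(C_n \vee \ell) \simeq \ast$. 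In the second case $\M(P_{n-1}) \simeq S^{2k-1}$, so $\Sigma \M(P_{n-1}) \simeq \Sigma S^{2k-1} = S^{2k}$. In the third case $\M(P_{n-1}) \simeq S^{2k}$, so $\Sigma \M(P_{n-1}) \simeq S^{2k+1}$. That matches the three cases in the statement.

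The one genuine subtlety — and the place I would be most careful — is the indexing translation between "$P_{n-1}$ is a path on $n$ vertices" as used in Proposition \ref{prop: cycle collapse} and "$P_{n-1}$ is a path on $n$ vertices" in Proposition \ref{prop: Kozolv}; happily these conventions agree, so no shift is needed, but I would double-check that $C_n$ minus a vertex really is $P_{n-1}$ (a path on $n-1$ vertices, i.e. $n-2$ edges), which is what Proposition \ref{prop: cycle collapse} asserts, and that Kozlov's three-way split is being applied to this $P_{n-1}$ and not to $P_n$. Everything else is a direct chain of cited implications, so the proof would be essentially the one-line remark "combine Propositions \ref{prop: cycle collapse}, \ref{prop: morse union}, \ref{prop: Kozolv}, \ref{prop: Barmakthesis}, and Corollary \ref{cor: suspension}," expanded into the three cases above.

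\begin{proof}
By Proposition \ref{prop: cycle collapse}, $\M(C_n \vee \ell) \searrow\searrow \M(P_{n-1} \sqcup \ell)$, and since a strong collapse is in particular a homotopy equivalence, $\M(C_n \vee \ell) \simeq \M(P_{n-1}\sqcup \ell)$. Here $\ell = P_1$, so by Proposition \ref{prop: morse union} (equivalently Corollary \ref{cor: suspension}), $\M(P_{n-1}\sqcup \ell) = \M(P_{n-1}) \ast \M(P_1) \simeq \Sigma \M(P_{n-1})$.

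If $n = 3k$, then $P_{n-1} = P_{3k-1}$ is strongly collapsible by Proposition \ref{prop: path strong collapse}, so by Proposition \ref{prop: Barmakthesis} the join $\M(P_{3k-1}) \ast \M(P_1)$ is strongly collapsible, hence contractible; thus $\M(C_n \vee \ell) \simeq *$.

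If $n = 3k+1$, then by Proposition \ref{prop: Kozolv}, $\M(P_{n-1}) \simeq S^{2k-1}$, so $\M(C_n \vee \ell) \simeq \Sigma S^{2k-1} = S^{2k}$. If $n = 3k+2$, then by Proposition \ref{prop: Kozolv}, $\M(P_{n-1}) \simeq S^{2k}$, so $\M(C_n \vee \ell) \simeq \Sigma S^{2k} = S^{2k+1}$. This establishes all three cases.
\end{proof}
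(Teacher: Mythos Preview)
Your proof is correct and follows exactly the route the paper takes: the paper's proof is literally the sentence ``Combining Propositions \ref{prop: cycle collapse}, \ref{prop: morse union}, \ref{prop: Kozolv}, \ref{prop: Barmakthesis}, and Corollary \ref{cor: suspension} yield the following,'' and your argument is the natural expansion of that sentence into the three cases modulo $3$, including the correct use of Proposition \ref{prop: Barmakthesis} (rather than a na\"ive suspension of a point) to handle the $n=3k$ case. One tiny slip: in the $n=3k$ case you write ``$P_{3k-1}$ is strongly collapsible'' when you mean ``$\M(P_{3k-1})$ is strongly collapsible,'' and in your preliminary discussion you describe $P_{n-1}$ as having $n-1$ vertices, whereas the paper's convention is that $P_t$ has $t+1$ vertices; neither affects the formal proof, since you cite Propositions \ref{prop: cycle collapse} and \ref{prop: Kozolv} with matching indices.
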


\subsection{The Morse complex of centipede graphs}

Here we will use Proposition \ref{prop: morse union} to compute the homotopy type of the Morse complex of any graph with the property that every vertex is either a leaf or adjacent to exactly one leaf. Centipede graphs satisfy this property.

	\begin{prop}
	    Let $G$ be a connected graph with $v$ vertices, and let $L(G)$ be the complex resulting from adding a leaf to each vertex of $G$ vertex. Then $\mathcal{M}(L(G)) \simeq S^{v-1}$.
	\end{prop}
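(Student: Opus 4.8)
The plan is to decompose $L(G)$ into pieces to which Proposition \ref{prop: morse union} applies, and then to exploit the structure of the Morse complex of a path of length one. Observe first that $L(G)$ is a connected graph: every vertex of $G$ has its leaf attached, and $G$ itself is connected. The leaves are new degree-one vertices, and each original vertex $u \in V(G)$ now has a leaf $\ell_u = \{u, u'\}$ dangling off it. My first step is to perform strong collapses inside $\M(L(G))$ using the leaves. Concretely, I would iterate Lemma \ref{lem: hasse collapse1}: attaching the leaf $\ell_u$ at $u$, the Morse complex $\M(L(G))$ strongly collapses to $f\bigl((\HH(L(G)) - u)\sqcup \HH(\ell_u)\bigr)$, which detaches $u$ from the rest and isolates a single edge (the Hasse diagram of $\ell_u$). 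Doing this once for every vertex $u$ of $G$ in turn, each original edge of $G$ gets its two endpoints pulled apart, so the underlying poset becomes a disjoint union of $v$ copies of $\HH(\ell)$ (one per original vertex, each now carrying exactly its pendant edge) — or more carefully, I need to track that after removing all the original vertices $u$ from the Hasse diagram, what remains is precisely $v$ disjoint edges, one for each $\ell_u$. So $\M(L(G)) \searrow\searrow f\bigl(\bigsqcup_{i=1}^{v} \HH(\ell)\bigr)$.

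Next I would apply Corollary \ref{cor: f join} repeatedly (or Proposition \ref{prop: morse union} directly, via Remark \ref{rem: general Hasse}) to get
$$
f\Bigl(\bigsqcup_{i=1}^{v} \HH(\ell)\Bigr) \simeq \underbrace{\M(\ell) \ast \M(\ell)\ast \cdots \ast \M(\ell)}_{v}.
$$
Now $\ell = P_1$ is a single edge, and its Morse complex $\M(P_1)$ consists of the two primitive gradient vector fields on the edge $\{v_0, v_1, v_0v_1\}$, namely $(v_0)v_1$ and $(v_1)v_0$; these are incompatible, so $\M(P_1)$ is two disjoint points, i.e. $\M(P_1) \cong S^0$. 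Therefore the $v$-fold join is $\underbrace{S^0 \ast S^0 \ast \cdots \ast S^0}_{v} \cong S^{v-1}$, using the standard fact that $S^i \ast S^j \cong S^{i+j+1}$. Combining with the strong collapse (which is in particular a homotopy equivalence) gives $\M(L(G)) \simeq S^{v-1}$.

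The main obstacle I anticipate is the bookkeeping in the first step: I must be careful that the strong collapses from Lemma \ref{lem: hasse collapse1}, applied successively at all $v$ vertices of $G$, really do compose, and that after removing $u_1, \ldots, u_v$ from $\HH(L(G))$ the leftover poset is exactly $v$ disjoint edges with nothing else. Since $G$ has no vertices beyond $u_1,\dots,u_v$ and every edge of $L(G)$ is either an edge of $G$ (both endpoints removed) or a pendant edge $\ell_{u_i}$ (which survives as an isolated edge once $u_i$'s other incidences are gone, but whose leaf-endpoint $u_i'$ is never removed), this should go through; one should phrase it as an induction on the vertices of $G$, at each stage noting that Lemma \ref{lem: hasse collapse1} still applies because the leaf being used is untouched by previous collapses. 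A minor point to check is that $L(G)$ and each intermediate complex has at least one edge so that Proposition \ref{prop: morse union} is applicable, which is immediate since $v \geq 1$ forces at least one pendant edge. Alternatively, and perhaps more cleanly, one can avoid the Hasse-diagram language entirely: apply Lemma \ref{lem: dominate} directly inside $\M(L(G))$ to strongly collapse away, for each vertex $u$ of $G$ and each neighbor $a$ of $u$ in $G$, the vertex $(u)a$ (dominated by $(u')u$ where $u'$ is $u$'s leaf), and then recognize that what remains is $\M$ of the disjoint union of the $v$ pendant edges; I would present whichever version is shorter.
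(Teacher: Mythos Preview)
Your proposal is correct, and the alternative you sketch at the end---applying Lemma \ref{lem: dominate} directly to dominate each $(u)a$ by $(u')u$, then recognising the remainder as $\M(\ell_1\sqcup\cdots\sqcup\ell_v)$---is precisely the paper's proof.

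Your first route via iterated applications of Lemma \ref{lem: hasse collapse1} reaches the same destination but carries exactly the bookkeeping risk you flag: that lemma is stated for $\M(K\vee_v\ell)$ with $K$ a simplicial complex, and after one application you are in $f$ of a poset that need not be $\HH$ of anything, so the lemma does not literally re-apply. (There is also a small slip: after removing all $u\in V(G)$ from $\HH(L(G))$ the edges of $G$ survive as isolated poset nodes, not nothing---though these contribute no edges and hence nothing to $f$, so the conclusion is unaffected.) Unpacking Lemma \ref{lem: hasse collapse1} just gives back Lemma \ref{lem: dominate} anyway, so the paper's choice to invoke Lemma \ref{lem: dominate} once and for all is the cleaner presentation; you would do well to lead with your alternative.
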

	
	\begin{proof} Let $G$ be a connected graph, and a call the leaves we add to $G$ to obtain $L(G)$ by $\ell_1, \ell_2, \ldots, \ell_v$. Let $\{a,ab\}$ be any leaf of $G$.  If $c$ is any neighbor of $b$, $c\neq a$, then $(b)c$ is dominated in $\mathcal{M}(L(G))$ by Lemma \ref{lem: dominate}. By adding a leaf to each vertex of $G$,every primitive gradient vector field on $G$ is dominated, and thus can be removed from $\mathcal{M}(L(G))$.  Since the Morse complex of a single leaf is $S^0$, we have
\begin{eqnarray*}
	    \mathcal{M}(\ell_1 \sqcup \ell_2 \sqcup\ldots \sqcup \ell_v) &=& \mathcal{M}(\ell_1) * \mathcal{M}(\ell_2) * \ldots \mathcal{M}(\ell_v)\\
	    &\simeq& S^{v-1}
\end{eqnarray*}
where the first equality is Proposition \ref{prop: morse union} and the second follows from the fact that  $\Sigma S^n \simeq S^{n+1}$.
	\end{proof}

A \textbf{\textit{centipede graph}}, denoted $\mathcal{C}_v$ is a graph obtained by attaching a leaf to each vertex of a path with $v$ vertices. The following corollary is then immediate.

	\begin{cor}\label{cor: cent}
	    Let $\mathcal{C}_v$ be a centipede graph. Then $\mathcal{M}(\mathcal{C}_v) \simeq S^{v-1}$.
	\end{cor}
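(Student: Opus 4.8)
The plan is to deduce Corollary~\ref{cor: cent} as a direct instance of the preceding proposition. A centipede graph $\mathcal{C}_v$ is, by definition, $L(P_{v-1})$: one takes the path $P_{v-1}$ on $v$ vertices and attaches a leaf to each of its vertices. Since a path is connected, the hypotheses of the proposition are met with $G = P_{v-1}$, a connected graph on $v$ vertices. Applying the proposition with this choice of $G$ immediately gives $\mathcal{M}(\mathcal{C}_v) = \mathcal{M}(L(P_{v-1})) \simeq S^{v-1}$.

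Concretely, I would write: ``By definition, $\mathcal{C}_v = L(P_{v-1})$, where $P_{v-1}$ is the path on $v$ vertices, which is connected. Applying the previous proposition with $G = P_{v-1}$ yields $\mathcal{M}(\mathcal{C}_v) = \mathcal{M}(L(P_{v-1})) \simeq S^{v-1}$.'' No additional argument is needed; the work has all been done in the proof of the proposition (which in turn rests on Lemma~\ref{lem: dominate}, Proposition~\ref{prop: morse union}, and the suspension identity $\Sigma S^n \simeq S^{n+1}$).

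There is essentially no obstacle here — the only thing to verify is the unwinding of definitions, namely that the centipede graph really is $L(P_{v-1})$ in the notation of the proposition, i.e.\ that "attaching a leaf to each vertex of a path with $v$ vertices" is exactly the operation $L(-)$ applied to $P_{v-1}$. That is immediate from the definitions as stated in the paper. So the "proof" is a one-line appeal to the proposition, which is why the text already flags it as ``immediate.''
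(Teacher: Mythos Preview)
Your proposal is correct and matches the paper's approach exactly: the paper flags the corollary as immediate from the preceding proposition, and your one-line reduction $\mathcal{C}_v = L(P_{v-1})$ is precisely the intended unwinding of definitions.
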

	
See Figure \ref{fig:M8} for an illustration of Corollary \ref{cor: cent}.

	\begin{center}
		\begin{figure} [h]
		
		\begin{tikzpicture}[scale=1.5, node distance = {20mm}, thick, main/.style = {draw, circle}]
		
		    \node[inner sep=1pt, circle, fill=black](v1) at (-1,0) {};
		    \node[inner sep=1pt, circle, fill=black](v1) at (0,0) {};
            \node[inner sep=1pt, circle, fill=black](v2) at (1,0) {};
            \node[inner sep=1pt, circle, fill=black](v3) at (-1,1) {};
            \node[inner sep=1pt, circle, fill=black](v3) at (0,1) {};
		`   \node[inner sep=1pt, circle, fill=black](v3) at (1,1) {};
		
		    \draw (-1,0) -- (0,0);
		    \draw (0,0) -- (1,0);
		    \draw (-1,0) -- (-1,1);
		    \draw (0,0) -- (0,1);
		    \draw (1,0) -- (1,1);
		
		    \node[inner sep=1pt, circle, fill=black](v1) at (3,0) {};
		    \node[inner sep=1pt, circle, fill=black](v1) at (4,0) {};
            \node[inner sep=1pt, circle, fill=black](v2) at (5,0) {};
            \node[inner sep=1pt, circle, fill=black](v3) at (3,1) {};
            \node[inner sep=1pt, circle, fill=black](v3) at (4,1) {};
		    \node[inner sep=1pt, circle, fill=black](v3) at (5,1) {};
		
		    \draw (3,0) -- (3,1);
		    \draw (4,0) -- (4,1);
		    \draw (5,0) -- (5,1);
		
		\end{tikzpicture}
		
		\caption{On the left, we start with $\mathcal{C}_2$. After performing the strong collapses that Lemma \ref{lem: dominate} allows, we then only have to take the Morse complex of the subcomplex on the right.  }\label{fig:M8}
		\end{figure}
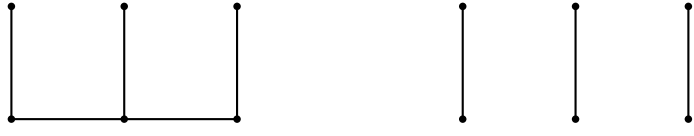
		\end{center}

\subsection{Morse complex of a path with a leaf}

    \begin{lem}\label{lem: leaf in a path}
        Let $v_k$ be a vertex of $P_t$, $1 \leq k \leq t-1$ and $t \geq 2$. Then $\mathcal{M}(P_t \vee_{v_k} \ell) \searrow \searrow \mathcal{M}(P_{k+1} \sqcup P_{t-(k+2)} \sqcup \ell)$.
    \end{lem}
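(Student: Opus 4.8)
The plan is to mimic the proof of Proposition~\ref{prop: cycle collapse}, using Lemma~\ref{lem: hasse collapse1} to reduce the computation to the level of (modified) Hasse diagrams. First I would apply Lemma~\ref{lem: hasse collapse1} with $K = P_t$ and the attaching vertex $v_k$ to obtain $\mathcal{M}(P_t \vee_{v_k} \ell) \searrow\searrow f\bigl((\HH(P_t) - v_k) \sqcup \HH(\ell)\bigr)$. The point here is that $v_k$ is an interior vertex of the path, so removing the node $v_k$ from $\HH(P_t)$ severs exactly the two edges $v_{k-1}v_k$ and $v_kv_{k+1}$, splitting the Hasse diagram of $P_t$ into the Hasse diagram of the subpath on $v_0, \ldots, v_{k-1}$ and the Hasse diagram of the subpath on $v_{k+1}, \ldots, v_t$. (One should double-check the indexing: the first piece is a path on $k$ vertices, hence $P_{k-1}$; the second is a path on $t-k$ vertices, hence $P_{t-k-1}$. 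The stated target $\mathcal{M}(P_{k+1} \sqcup P_{t-(k+2)} \sqcup \ell)$ presumably reflects the authors' indexing conventions for when one also counts the half-edges or records the path by edge count; I would reconcile this discrepancy carefully when writing the final version, but the structural argument is insensitive to it.)

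Next I would identify the two path pieces at the level of Hasse diagrams: writing $\HH(P_t) - v_k \simeq \HH(P_{k-1}) \sqcup \HH(P_{t-k-1})$ (up to the indexing caveat above), and then apply Corollary~\ref{cor: f join} repeatedly, together with Remark~\ref{rem: general Hasse}, to rewrite
\[
f\bigl((\HH(P_t) - v_k) \sqcup \HH(\ell)\bigr) \simeq f(\HH(P_{k-1})) \ast f(\HH(P_{t-k-1})) \ast \mathcal{M}(\ell) \simeq \mathcal{M}(P_{k-1}) \ast \mathcal{M}(P_{t-k-1}) \ast \mathcal{M}(\ell).
\]
On the other side, Proposition~\ref{prop: morse union} gives $\mathcal{M}(P_{k-1} \sqcup P_{t-k-1} \sqcup \ell) = \mathcal{M}(P_{k-1}) \ast \mathcal{M}(P_{t-k-1}) \ast \mathcal{M}(\ell)$ directly, so the two complexes agree (as simplicial complexes, not merely up to homotopy, provided one is careful that all the $f(\HH(\cdot))$ identifications are genuine equalities rather than just homotopy equivalences — which they are, since $\HH(P_t)-v_k$ really \emph{is} a disjoint union of two path Hasse diagrams on the nose).

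The main obstacle, and the only real subtlety, is bookkeeping at the endpoints and indices: one must verify that $1 \le k \le t-1$ guarantees $v_k$ has exactly two neighbors in $P_t$ (so Lemma~\ref{lem: hasse collapse1} produces precisely two components and no leaf of $P_t$ is accidentally consumed), and one must nail down exactly which path lengths appear so the statement matches. There is also the degenerate possibility that $k=1$ or $k=t-1$, in which case one of the two path pieces is a single vertex $P_0$; since $\mathcal{M}(P_0)$ is a point and joining with a point is harmless, Proposition~\ref{prop: morse union}'s hypothesis ``each with at least one edge'' must be checked not to be violated, or the argument phrased so that a trivial factor is simply dropped. I would handle this by stating the lemma for the range where both pieces have at least one edge and remarking on the boundary cases, or by invoking the convention that $\mathcal{M}$ of an edgeless complex is a point and $\ast$ with a point is the identity. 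Aside from this indexing care, the proof is a routine application of the machinery already assembled.
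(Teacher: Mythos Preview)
Your proposal has a genuine gap that is not merely an indexing issue. When you apply Lemma~\ref{lem: hasse collapse1} and delete the node $v_k$ from $\HH(P_t)$, the \emph{edge-nodes} $v_{k-1}v_k$ and $v_kv_{k+1}$ remain in the poset as maximal elements, each now sitting above only a single vertex-node. Consequently the two components of $\HH(P_t)-v_k$ are \emph{not} Hasse diagrams of paths: the left component is the zigzag
\[
v_0 \,\text{--}\, v_0v_1 \,\text{--}\, v_1 \,\text{--}\, \cdots \,\text{--}\, v_{k-1} \,\text{--}\, v_{k-1}v_k
\]
on $2k$ nodes, with equal numbers of minimal and maximal elements, whereas $\HH(P_m)$ always has $m+1$ minimal and $m$ maximal elements. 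In particular $f$ of the left component has $2k-1$ vertices while $\M(P_m)$ has $2m$ vertices for every $m$, so no reindexing can make them agree. Your remark that ``the structural argument is insensitive to it'' is therefore incorrect: the discrepancy with the stated target is structural, not cosmetic.

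The paper avoids this by \emph{not} invoking Lemma~\ref{lem: hasse collapse1} in full. It performs only one of the two dominations available at $v_k$, namely $(u)v_k$ dominates $(v_k)v_{k+1}$; this leaves the Hasse edge $(v_k,v_{k-1}v_k)$ intact and absorbs the leaf $\ell=v_ku$ into the left piece, which is then the honest path Hasse diagram $\HH(P_{k+1})$ on the vertices $u,v_k,v_{k-1},\ldots,v_0$. The right piece still carries the dangling node $v_kv_{k+1}$, and to repair it the paper uses a \emph{second} domination coming from the far end of $P_t$: since $v_t$ is a leaf, $(v_t)v_{t-1}$ dominates $(v_{t-1})v_{t-2}$, and removing that Hasse edge cleaves off a copy of $\HH(\ell)$ and leaves an upside-down $\HH(P_{t-(k+2)})$. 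This asymmetric two-step collapse, exploiting both the attached leaf and the existing leaf $v_t$ of $P_t$, is the idea missing from your outline.
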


    \begin{proof}
        Write $\ell = v_ku$.  By Lemma \ref{lem: dominate}, $(u)v_k$ dominates  $(v_k)v_{k+1}$ in $\mathcal{M}(P_t \vee_{v_k} \ell)$. In the corresponding Hasse diagram $\mathcal{H}(P_t \vee_{v_k} \ell)$, this corresponds to the removal of the edge between $v_k$ and $v_kv_{k+1}$.

        Furthermore, by Lemma \ref{lem: dominate}, $(v_t)v_{t-1}$ dominates $(v_{t-1})v_{t-2}$. This corresponds to the removal of the edge between $v_{t-1}$ and $v_{t-2}v_{t-1}$ on the Hasse diagram. Upon inspection, we see that this yields three components: the Hasse diagram of the path $P_{k+1}$, the Hasse diagram of the path $P_1=\ell$, and an ``upside-down" Hasse diagram of the path $\mathcal{H}(P_{t-(k+2)})$. Thus
        \begin{align*}
        \mathcal{M}(P_t \vee_{v_k} \ell) \searrow \searrow f(\mathcal{H}(P_{k+1}) \sqcup \mathcal{H}(P_{t-(k+2)}) \sqcup \mathcal{H}(\ell)).
        \end{align*}
        By Proposition \ref{prop: morse union} and Remark \ref{rem: general Hasse}, we have that
        \begin{align*}
        f(\mathcal{H}(P_{k+1}) \sqcup \mathcal{H}(P_{t-(k+2)}) \sqcup \mathcal{H}(\ell)) \simeq \mathcal{M}(P_{k+1} \sqcup P_{t-(k+2)} \sqcup \ell).
        \end{align*}
    \end{proof}

    Combining Lemma \ref{lem: leaf in a path} and Proposition \ref{prop: morse union}, we have

    \begin{prop}\label{prop: path leaf}
        Let $v_k$ be a vertex of $P_t$, $1 \leq k \leq t-1$. Then $\mathcal{M}(P_t \vee_{v_k} \ell) \simeq \mathcal{M}(P_{k+1}) * \mathcal{M}(P_{t-(k+2)}) * \mathcal{M}(\ell)$.
    \end{prop}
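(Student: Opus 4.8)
The plan is to combine the two results the proposition is explicitly said to combine, so the proof is essentially a two-line chain of equivalences. First I would invoke Lemma \ref{lem: leaf in a path}, which gives the strong collapse
\[
\mathcal{M}(P_t \vee_{v_k} \ell) \searrow\searrow \mathcal{M}(P_{k+1} \sqcup P_{t-(k+2)} \sqcup \ell),
\]
and in particular a homotopy equivalence $\mathcal{M}(P_t \vee_{v_k} \ell) \simeq \mathcal{M}(P_{k+1} \sqcup P_{t-(k+2)} \sqcup \ell)$ since a strong collapse is a sequence of collapses (cited Remark after Definition of strong homotopy type). Then I would apply Proposition \ref{prop: morse union} to the disjoint union, which requires each piece to be connected with at least one edge; $\ell$ is a single edge, and $P_{k+1}$, $P_{t-(k+2)}$ are paths, so as long as they are genuine paths with at least one edge this applies, giving
\[
\mathcal{M}(P_{k+1} \sqcup P_{t-(k+2)} \sqcup \ell) = \mathcal{M}(P_{k+1}) * \mathcal{M}(P_{t-(k+2)}) * \mathcal{M}(\ell).
\]
Chaining these yields the claimed homotopy equivalence.

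The one subtlety I would address explicitly is the range of $k$ and the degenerate cases of the path factors. For $k$ near the ends of the range $1 \le k \le t-1$, one of $P_{k+1}$ or $P_{t-(k+2)}$ can degenerate: e.g. $P_{t-(k+2)}$ has index $-1$ when $k=t-1$ (an empty path) and index $0$ when $k=t-2$ (a single vertex, no edge). In these boundary cases Proposition \ref{prop: morse union} does not literally apply to that factor, but the interpretation is benign — the Morse complex of the empty or single-vertex path is a point (the void/empty matching), and joining with a point does not change homotopy type, so the formula still holds with the convention $\mathcal{M}(P_{-1}) = \mathcal{M}(P_0) = *$. I would simply remark that Lemma \ref{lem: leaf in a path} already accounts for this by listing the three Hasse components, some possibly trivial, and that Corollary \ref{cor: f join}/Proposition \ref{prop: morse union} extend to the degenerate summands since a join with a point is a homotopy equivalence.

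The main (and only real) obstacle is bookkeeping rather than mathematics: making sure the degenerate endpoints of the index range are handled consistently with whatever convention the paper adopts for paths with zero or negative length, and confirming that the hypotheses of Proposition \ref{prop: morse union} (connectedness, at least one edge) are met or gracefully circumvented for each factor. Once that is pinned down, the proof is just the substitution

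\medskip
\noindent Combining Lemma \ref{lem: leaf in a path} with Proposition \ref{prop: morse union} and Remark \ref{rem: general Hasse}:
\[
\mathcal{M}(P_t \vee_{v_k} \ell) \simeq \mathcal{M}(P_{k+1} \sqcup P_{t-(k+2)} \sqcup \ell) \simeq \mathcal{M}(P_{k+1}) * \mathcal{M}(P_{t-(k+2)}) * \mathcal{M}(\ell),
\]
which is what we wanted to show.
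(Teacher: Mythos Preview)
Your proposal is correct and matches the paper's approach exactly: the paper simply states that the proposition follows by combining Lemma \ref{lem: leaf in a path} and Proposition \ref{prop: morse union}, with no further argument given. Your additional discussion of the degenerate cases $k=t-1$ and $k=t-2$ is more careful than what the paper provides.
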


    Considering Proposition \ref{prop: path leaf}, Proposition \ref{prop: Barmakthesis}, and Proposition \ref{prop: path strong collapse}, we can conclude the following:

    \begin{cor}
        Let $v_k$ be a vertex of $P_t$,  $1 \leq k \leq t-1$. If $k+1 = 2j$ or $t-(k+2) = 2j$, then $\mathcal{M}(P_t \vee_{v_k} l) \searrow\searrow *$.
    \end{cor}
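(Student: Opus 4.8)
The plan is to obtain the corollary by stringing together, in order, Lemma \ref{lem: leaf in a path}, Proposition \ref{prop: morse union}, Proposition \ref{prop: path strong collapse}, and Proposition \ref{prop: Barmakthesis}; the statement is essentially a bookkeeping consequence of these four results. First I would apply Lemma \ref{lem: leaf in a path} to get the strong collapse
$$\mathcal{M}(P_t \vee_{v_k} \ell) \searrow\searrow \mathcal{M}(P_{k+1} \sqcup P_{t-(k+2)} \sqcup \ell).$$
Since a sequence of elementary strong collapses followed by another such sequence is again a sequence of elementary strong collapses, it then suffices to prove that the right-hand side is strongly collapsible; composing the two collapse sequences yields $\mathcal{M}(P_t \vee_{v_k} \ell) \searrow\searrow *$.

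Second, I would rewrite the target as an iterated join. Applying Proposition \ref{prop: morse union} twice, using that $\sqcup$ and $\ast$ are associative, gives
$$\mathcal{M}(P_{k+1} \sqcup P_{t-(k+2)} \sqcup \ell) = \mathcal{M}(P_{k+1}) \ast \mathcal{M}(P_{t-(k+2)}) \ast \mathcal{M}(\ell).$$
The hypothesis on $k+1$ and $t-(k+2)$ is precisely the numerical condition placing one of the two path factors $P_{k+1}$, $P_{t-(k+2)}$ in the family covered by Proposition \ref{prop: path strong collapse}, so under this hypothesis at least one of $\mathcal{M}(P_{k+1})$, $\mathcal{M}(P_{t-(k+2)})$ is strongly collapsible. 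One should keep an eye on the degenerate range in which $t-(k+2)\le 0$ and $P_{t-(k+2)}$ is empty or a single isolated vertex: there the middle factor simply drops out of the join, and since the hypothesis (with $j\ge 1$) forces the \emph{surviving} path factor to be the one with strongly collapsible Morse complex, nothing is lost.

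Third, I would finish with Proposition \ref{prop: Barmakthesis}: a join $X\ast Y$ is strongly collapsible as soon as one of $X$, $Y$ is. Since the join is commutative and associative up to isomorphism, I can regroup the triple join so that the strongly collapsible factor sits as one of the two outermost pieces, and then Proposition \ref{prop: Barmakthesis} gives $\mathcal{M}(P_{k+1}) \ast \mathcal{M}(P_{t-(k+2)}) \ast \mathcal{M}(\ell) \searrow\searrow *$. (If one only wanted the homotopy-theoretic conclusion, this last paragraph combined with Proposition \ref{prop: path leaf} would already do it; the reason to begin from Lemma \ref{lem: leaf in a path} rather than Proposition \ref{prop: path leaf} is to retain the stronger $\searrow\searrow$ assertion throughout.)

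I do not expect a real obstacle, since all the substantive work is already carried out in the cited results. The only points that warrant a word of care are: the transitivity of $\searrow\searrow$, which is immediate from the definition of a strong collapse as a sequence of elementary ones; the promotion of Proposition \ref{prop: Barmakthesis} from binary to ternary joins, handled by associativity and commutativity of the join; and the small-parameter corner cases noted above, where the content is merely the observation that the hypothesis selects exactly the non-degenerate path factor.
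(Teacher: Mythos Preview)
Your approach is essentially the same as the paper's: the paper simply invokes Proposition \ref{prop: path leaf}, Proposition \ref{prop: Barmakthesis}, and Proposition \ref{prop: path strong collapse} in one sentence, and your write-up is a careful unpacking of exactly that chain. Your choice to start from Lemma \ref{lem: leaf in a path} rather than Proposition \ref{prop: path leaf} is a nice refinement, since it keeps the conclusion at the level of $\searrow\searrow$ rather than $\simeq$, which is what the corollary actually asserts.
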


\section{Automorphism group of the Morse complex of a disconnected complex}\label{sec: Automorphism group of the Morse complex of a disconnected complex}

In this section, we give another application of Proposition \ref{prop: morse union}. In \cite{LinSco-19}, a subset of the authors computed the automorphism group of the Morse complex of any connected simplicial complex.  There it was shown that if $K$ is a connected simplicial complex, then

$$
\Aut(\M(K))\cong
\begin{cases}
\Aut(K) & \text{if } K\neq \pd^n,C_n \\
\Aut(C_{2n}) & \text{if } K= C_n \\
\Aut(K)\times \mathbb{Z}_2 & \text{if } K= \pd^n. \\
\end{cases}
$$

Proposition \ref{prop: morse union} along with the results in this section allow us to compute the automorphism group of the Morse complex for certain disconnected complexes.

\begin{defn} Let $K$ be a simplicial complex. A subcomplex $U\leq K$ is called \textbf{fully connected} in $K$ if $K\cong U \ast(K-U).$
\end{defn}

\begin{example} If $K=\Sigma L=\{u,v\}*L$ for some complex $L$, then the subcomplex $U=\{u,v\}$ of $K$ is fully connected.
\end{example}

\begin{prop}\label{prop: aut product} Let $K,L$ be simplicial complexes.  Then $\Aut(K\ast L) \cong\Aut(K) \times \Aut(L)$ except when there exists subcomplexes $U_1 \leq K, U_2 \leq L$ with $U_1 \cong U_2$, such that $U_1$ is fully connected in $K$ and $U_2$ is fully connected in $L$.
\end{prop}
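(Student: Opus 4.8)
The plan is to prove the statement in two directions. First I would establish the ``easy'' inclusion: there is always a natural injective homomorphism $\Aut(K) \times \Aut(L) \hookrightarrow \Aut(K \ast L)$, since an automorphism of $K$ and an automorphism of $L$ combine on the disjoint vertex set $V(K) \sqcup V(L)$ to give a simplicial automorphism of $K \ast L$ (a simplex of $K\ast L$ is $\sigma \cup \tau$ with $\sigma\in K$, $\tau\in L$, and these go to $\phi(\sigma)\cup\psi(\tau)$, again a simplex). Injectivity is clear. So the content is entirely in showing that, \emph{absent the exceptional configuration}, this homomorphism is also surjective — i.e., every automorphism of $K\ast L$ preserves the partition of the vertex set into $V(K)$ and $V(L)$ (possibly after checking we cannot have an automorphism swapping the two factors when $K\not\cong L$, and ruling out ``mixed'' automorphisms in general).

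**The key combinatorial observation.** The main step is to characterize, purely in terms of the complex $K \ast L$, which vertices came from $K$ and which from $L$. The natural invariant is the \emph{link} (or equivalently the set of non-neighbors / the ``anticomplex''): in a join $K\ast L$, every vertex of $K$ is connected to every vertex of $L$, so two vertices $v,w$ fail to be joined by an edge only if they lie in the same factor. Consider the graph $G$ on $V(K\ast L)$ whose edges are the \emph{non-edges} of the $1$-skeleton of $K\ast L$ (the complement graph). Then $G$ is the disjoint union of the complement graph of $K^{(1)}$ and that of $L^{(1)}$, with no edges between. Any automorphism of $K\ast L$ induces an automorphism of $G$, hence permutes the connected components of $G$. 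If neither complement-graph is connected-to-everything — the degenerate cases — then the components of $G$ don't by themselves recover the factorization, but one can work with the finest join decomposition: every simplicial complex $M$ has a unique maximal decomposition $M = M_1 \ast M_2 \ast \cdots \ast M_r$ into join-indecomposable factors (this is standard; it corresponds exactly to the connected components of the complement of the $1$-skeleton). An automorphism of $M$ permutes these join-indecomposable factors, inducing isomorphisms among them, and acting arbitrarily within an isomorphism class. So $\Aut(K\ast L)$ is a semidirect-product-like combination: if $K = \bigast A_i$ and $L = \bigast B_j$ are the join-indecomposable decompositions, then $\Aut(K\ast L)$ permutes the multiset $\{A_i\}\cup\{B_j\}$ respecting isomorphism type.

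**Isolating the exceptional case.** From this, $\Aut(K\ast L) \cong \Aut(K)\times\Aut(L)$ \emph{exactly when no join-indecomposable factor of $K$ is isomorphic to a join-indecomposable factor of $L$}, because only then is every automorphism of $K\ast L$ forced to preserve the grouping $\{A_i\}$ versus $\{B_j\}$. I would then check that ``some join-indecomposable factor of $K$ is isomorphic to one of $L$'' is equivalent to the stated condition ``there exist subcomplexes $U_1\leq K$, $U_2\leq L$ with $U_1\cong U_2$, fully connected in $K$ and $L$ respectively'': indeed a fully connected subcomplex $U_1\leq K$ is by definition one with $K\cong U_1\ast(K-U_1)$, and such $U_1$ decomposes as a sub-join of the join-indecomposable factors of $K$; conversely any single join-indecomposable factor is fully connected. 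The only subtlety is that a fully connected $U_1$ need not be join-indecomposable — it could be a join of several $A_i$'s — but if $U_1\cong U_2$ are fully connected and isomorphic, their join-indecomposable decompositions match up, yielding a matched pair of \emph{indecomposable} factors $A_i\cong B_j$; so the two formulations coincide.

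**Main obstacle.** The principal technical point I expect to need care is the uniqueness of the join-indecomposable decomposition of a finite simplicial complex and the fact that automorphisms respect it — essentially a ``unique factorization'' theorem for the join operation. This is where one must be precise: one shows $M$ is join-indecomposable iff the complement of its $1$-skeleton is connected (on $\geq 2$ vertices, or $M$ is a single vertex / the void cases handled separately), and then the components of that complement graph give the canonical factors, which any automorphism must permute. Once this structural lemma is in hand, the theorem follows by the bookkeeping above; handling trivial edge cases (one factor a single vertex, or $K\cong L$ themselves being indecomposable, which is the cleanest instance of the exception and matches the $\pd^n$, $C_n$ phenomena recorded in \cite{LinSco-19}) is then routine.
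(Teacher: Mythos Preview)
Your overall architecture --- canonical join-indecomposable decomposition, automorphisms permute the factors, so the natural embedding $\Aut(K)\times\Aut(L)\hookrightarrow\Aut(K*L)$ is surjective precisely when no factor of $K$ matches a factor of $L$, and then this condition is equivalent to the one stated --- is sound and is a genuinely different, more structural route than the paper's. However, the specific technical lemma you single out as the crux is false as stated. You claim that a finite simplicial complex $M$ is join-indecomposable if and only if the complement of its $1$-skeleton is connected. Take $M=\partial\Delta^2$: its $1$-skeleton is the complete graph $K_3$, so the complement graph consists of three isolated vertices and is disconnected; yet $\partial\Delta^2$ is not a nontrivial join (any join on three vertices containing all three edges is forced to contain the $2$-simplex). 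The implication holds only in the direction ``complement connected $\Rightarrow$ join-indecomposable''; the converse fails whenever $M$ is not a flag complex, since the $1$-skeleton no longer determines $M$.

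The repair is straightforward once the issue is seen: replace the complement of the $1$-skeleton by the hypergraph on $V(M)$ whose hyperedges are the \emph{minimal non-faces} of $M$. One checks that $M\cong M[V_1]*M[V_2]$ for a partition $V(M)=V_1\sqcup V_2$ if and only if every minimal non-face lies entirely inside $V_1$ or entirely inside $V_2$; consequently the connected components of this hypergraph give the canonical join-indecomposable factorization, automorphisms permute these components, and the rest of your argument goes through verbatim.

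By contrast, the paper bypasses any factorization theory. After the same embedding of $\Aut(K)\times\Aut(L)$, it treats the two directions bare-handed: when the exceptional configuration is present it explicitly writes down the automorphism of $K*L$ that swaps $U_1$ with $U_2$ via the given isomorphism and fixes everything else; for the converse it takes any $f\in\Aut(K*L)$ not in the image, lets $U_1$ be the part of $K$ that $f$ carries into $L$ and $U_2=f(U_1)$, and checks full-connectedness directly from the fact that $f$ preserves the join relation. Your approach buys a cleaner conceptual picture (and incidentally shows the $U_i$ can always be chosen join-indecomposable); the paper's argument is shorter and avoids appealing to a unique-factorization lemma.
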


For example, Proposition \ref{prop: aut product} does not hold for $K=\Sigma K_0$ and $L=\Sigma L_0$.

\begin{proof}
We first show that $\Aut(K) \times \Aut(L)$ is a subgroup of $\Aut(K\ast L)$. Define $\phi\colon \Aut(K) \times \Aut(L)\to \Aut(K\ast L)$ as follows. Let $(a,b) \in \Aut(K) \times \Aut(L)$.  For any $\sigma\in K\ast L$, write $\sigma=\alpha \beta$ for $\alpha \in K$ and $\beta \in L$. Let $\phi_{(a,b)}\colon K*L\to K*L$ by $\phi_{(a,b)} (\sigma) = a(\alpha)b(\beta)$, and define $\phi(a,b)=\phi_{(a,b)}.$  Consider any simplex $\sigma = \alpha\beta \in K \ast L$, with $\alpha\in K$ and $\beta \in L$, and any $(a,b), (c,d) \in \Aut(K)\times \Aut(L)$. We have
\begin{align*}
    (\phi(a,b)\circ\phi(c,d))(\sigma)&= \phi(a,b)(\phi(c,d)(\alpha\beta))\\
    &= \phi(a,b)(c(\alpha)b(\beta))\\
    &= (a\circ c)(\alpha)(b\circ d)(\beta)\\
    &= \phi(a\circ c, b\circ d)(\sigma)\\
    &= \phi((a,b)\circ (c,d))(\sigma).
\end{align*}

Hence $\phi$ is a homomorphism. We now show that $\phi$ is injective. Consider any $(f,g) \in \Ker(\phi)$. Then $\phi(f,g) = \id_{K\ast L}$. Consider $\sigma = \alpha\beta \in K\ast L$, where $\alpha\in K, \beta \in L$. Then we have
\begin{align*}
    \phi(f,g)(\sigma) &=\sigma\\
    f(\alpha)g(\beta)&= \alpha\beta.
\end{align*}
It follows that $f(\alpha)=\alpha$, and $g(\beta) = \beta$. This holds for any choice of $\alpha \in K, \beta \in L$, so $f = \id_K$ and $g = \id_L$. Hence $\Ker(\phi)$ is trivial, so $\phi$ is injective. Therefore $\Aut(K) \times \Aut(L)$ is a subgroup of $\Aut(K\ast L)$. \\

We now show that if the proposed conditions hold, $|\Aut(K\ast L)| > |\Aut(K) \times \Aut(L)|$. In particular, we show that there exists an automorphism in $\Aut(K\ast L)$ that does not emerge from ``combining" automorphisms in $\Aut(K)$ and $\Aut(L)$. We build this from the hypothesis that $U_1\cong U_2$.  Hence let $g\colon U_1 \to U_2$ be an isomorphism. Construct a function $f_V: V(K\ast L) \to V(K\ast L)$ such that $f_V(v) = g(v)$ if $v \in U_1$, $f_V(v) = g^{-1}(v)$ if $v \in U_2$, and $f_V(v) = v$ if $v \not\in U_1 \cup U_2$. It is easy to see that $f_V$ is a bijection. Let $f\colon K\ast L \to K\ast L$ be the induced function on the join.

We first show that $f$ is a simplicial map. Consider any simplices $u_1 \in U_1, u_2 \in U_2, k \in K - U_1, \ell \in L-U_2$. Since $U_1$ is fully connected in $K$ and $U_2$ is fully connected in $L$, we have that each of $U_1, U_2, K-U_1, L-U_2$ is joined to each other (excluding itself) in $K*L$.  By definition, the simplex $u_1u_2k\ell \in K\ast L$. Consider any simplex $\sigma \in K\ast L$. Write $\sigma$ as some $u_1u_2k\ell$ as above, in which $u_1 \in U_1, u_2 \in U_2, k \in K - U_1, \ell \in L-U_2$, allowing any of $u_1, u_2, k, \ell$ to be empty.  Then $f(u_1u_2k\ell) = f(u_1)f(u_2)f(k)f(\ell) = f(u_1)f(u_2)k\ell$. Since $f(u_1) = g(u_1) \in U_2$ and $f(u_2) =g^{-1}(u_2)\in U_1$, we know that $f(\sigma) = g(u_1)g^{-1}(u_2)k\ell \in K\ast L$. Therefore $f$ is a simplicial map. Since $f_V$ is also a bijection, $f$ is an automorphism.
Thus $f \in \Aut(K\ast L)$. However, note that $f$ sends vertices of $K$ to $L$ and vice versa (namely, it swaps vertices of $U_1$ and $U_2$), and therefore $f \not \in \Aut(K) \times \Aut(L)$.

For the other direction, we prove the contrapositive. Suppose that $\Aut(K\ast L) \not\cong\Aut(K) \times \Aut(L)$. Since  $\Aut(K) \times \Aut(L)$ is a subgroup of $\Aut(K\ast L)$, $|\Aut(K) \times \Aut(L)| < |\Aut(K\ast L)|$. Thus there must exist some function $f \in \Aut(K\ast L)$, with $f \not\in \Aut(K) \times \Aut(L)$. As shown earlier, $\Aut(K) \times \Aut(L)$ consists of isomorphisms on $K\ast L$ that send subcomplexes of $K$ to subcomplexes of $K$, and subcomplexes of $L$ to subcomplexes of $L$. Therefore $f$ must send some subcomplex $U_1$ of $K$ to some subcomplex $U_2$ of $L$. We first show that $U_2$ is fully connected in $L$. Notice that in $K\ast L$, $K$ (and thus any subcomplex of $K$) is joined to $L$. Therefore $U_1$ is joined to $L$, so it is joined to $L-U_2$. Since $f$ is an isomorphism, it follows that $U_2$ is joined to $L-U_2$. This means $U_2$ is fully connected in $L$. By the same reasoning, since $U_2$ is joined to $K$, and thus $K-U_1$, it follows that $U_1$ is joined to $K-U_1$, so $U_1$ is fully connected in $K$. Thus, the desired condition holds.
\end{proof}



\begin{thm} Let $K_1,K_2$ be simplicial complexes with $K_1,K_2\neq \pd^n$ or $C_n$. If $K_1, K_2$ do not contain subcomplexes $U_1, U_2$ satisfying the conditions of Proposition \ref{prop: aut product}, then $\Aut(\M(K_1\cup K_2)) \cong \Aut(K_1)\times \Aut(K_2)$.
\end{thm}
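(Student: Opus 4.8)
The plan is to bolt together three facts already in hand: Proposition~\ref{prop: morse union}, the automorphism computation of~\cite{LinSco-19}, and Proposition~\ref{prop: aut product}. Assuming, as we may, that each $K_i$ carries at least one edge (otherwise $\M(K_i)=\emptyset$ and the statement is immediate), Proposition~\ref{prop: morse union} gives $\M(K_1\cup K_2)=\M(K_1)\ast\M(K_2)$, and since $K_1,K_2$ are connected and not equal to $\pd^n$ or $C_n$, the main result of~\cite{LinSco-19} gives $\Aut(\M(K_i))\cong\Aut(K_i)$ for $i=1,2$. So everything reduces to applying Proposition~\ref{prop: aut product} to the pair $\M(K_1),\M(K_2)$, i.e.\ to ruling out the existence of isomorphic (nonempty) fully connected subcomplexes $W_1\leq\M(K_1)$ and $W_2\leq\M(K_2)$.

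The crux, which I would isolate as a lemma, is: \emph{if $K$ is connected with at least one edge, then $\M(K)$ is not a nontrivial join, so its only nonempty fully connected subcomplex is $\M(K)$ itself.} To prove it, I would first note that two primitive gradient vector fields $(\sigma,\tau)$ and $(\sigma',\tau')$ are incompatible precisely when $\{\sigma,\tau\}\cap\{\sigma',\tau'\}\neq\emptyset$, since two distinct regular pairs cannot by themselves form a nontrivial closed $V$-path. Now suppose $\M(K)=A\ast B$ with $A,B$ nonempty. For every edge $e=uv$ the vertices $(u,uv)$ and $(v,uv)$ share $uv$, hence lie on the same side of the join; because the $1$-skeleton of $K$ is connected, passing from edge to adjacent edge forces all index-$1$ vertices of $\M(K)$ onto a single side, say $A$. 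Pick $(\sigma',\tau')\in B$ with $\dim\tau'$ minimal; since $(\sigma',\tau')$ is not an index-$1$ vertex we have $\dim\tau'\geq 2$, so $\sigma'$ has dimension $\geq 1$ and admits a codimension-$1$ face $\rho$. The vertex $(\rho,\sigma')$ shares $\sigma'$ with $(\sigma',\tau')$, hence lies on the same side; but it cannot lie in $B$, because its index $\dim\sigma'$ is strictly smaller than $\dim\tau'$, so membership in $B$ would either violate minimality of $\dim\tau'$ or, when $\dim\sigma'=1$, contradict that all index-$1$ vertices lie in $A$. Hence $(\rho,\sigma')\in A$, and now the incompatible pair $(\rho,\sigma')\in A$, $(\sigma',\tau')\in B$ contradicts $\M(K)=A\ast B$. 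Therefore $B=\emptyset$.

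Given the lemma, each $W_i$ must equal $\M(K_i)$, so the only way the exceptional clause of Proposition~\ref{prop: aut product} can hold is if $\M(K_1)\cong\M(K_2)$; by the reconstruction theorem of Capitelli and Minian~\cite{CM-17}, this forces $K_1\cong K_2$, and then $U_1=K_1$, $U_2=K_2$ would be isomorphic fully connected subcomplexes of $K_1$ and $K_2$ --- exactly what the hypothesis forbids. Hence the exceptional clause fails, Proposition~\ref{prop: aut product} yields $\Aut(\M(K_1)\ast\M(K_2))\cong\Aut(\M(K_1))\times\Aut(\M(K_2))$, and combining this with the two identifications above gives $\Aut(\M(K_1\cup K_2))\cong\Aut(K_1)\times\Aut(K_2)$. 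The main obstacle is the lemma; the delicate point there is arranging the incompatibility bookkeeping so that a single lowest-index vertex on the ``wrong'' side of the join produces the contradiction, and one should also take care that the exceptional clause of Proposition~\ref{prop: aut product} is read with nonempty $U_i$, since for $U_i=\emptyset$ the clause is vacuous and contributes no automorphism.
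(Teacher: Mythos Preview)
Your proof follows the same three-ingredient skeleton as the paper's --- Proposition~\ref{prop: morse union}, then Proposition~\ref{prop: aut product}, then \cite{LinSco-19} --- but you go substantially further in justifying the middle step. The paper simply applies Proposition~\ref{prop: aut product} to $\M(K_1),\M(K_2)$ and asserts that the stated hypothesis on $K_1,K_2$ is what is required, without explaining the passage from subcomplexes of $K_i$ to subcomplexes of $\M(K_i)$. You close that gap with a genuine lemma (that $\M(K)$ is join-indecomposable for connected $K$) and then invoke the Capitelli--Minian reconstruction theorem \cite{CM-17} to pull the residual case $\M(K_1)\cong\M(K_2)$ back to $K_1\cong K_2$. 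Your lemma is correct: the key incompatibility claim holds because two distinct $(p{+}1)$-simplices in a simplicial complex can share at most one $p$-face, so two disjoint primitive pairs never produce a closed $V$-path; and your minimal-index descent on the $B$ side is sound (the phrasing is slightly redundant --- once you know $(\rho,\sigma')$ must lie on the same side as $(\sigma',\tau')$ yet cannot lie in $B$, you already have the contradiction). What your approach buys is that the theorem really can be read with the hypothesis placed on $K_1,K_2$ themselves; what the paper's approach buys is brevity, at the cost of leaving the reader to supply or overlook exactly the bridge you have built.
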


\begin{proof}
For disjoint complexes $K_1, K_2$, we have
$\M(K_1\cup K_2)  \cong \M(K_1)\ast \M(K_2)$ by Proposition \ref{prop: morse union}, hence $\Aut(\M(K_1\cup K_2))  \cong \Aut(\M(K_1)\ast \M(K_2))$. We then have $\Aut(\M(K_1\cup K_2)) \cong \Aut(\M(K_1))\times \Aut(\M(K_2))$ precisely when the condition in Proposition \ref{prop: aut product} holds. By \cite[Theorem 1]{LinSco-19}, $\Aut(\M(K_1\cup K_2)) \cong \Aut(K_1)\times \Aut(K_2)$.
\end{proof}

A similar statement, which we omit here, can be made with $K_1,K_2=\pd^n$ or $C_n$.

\section{Future directions and open questions}\label{sec: Future directions and open questions}

The many applications of the results found in Section \ref{sec: The Morse complex of cycles wedge a leaf} suggests a convenient way to determine the strong collapsibility of a simplicial complex's Morse complex via a careful study of the Hasse Diagram. The following Lemma further evidences this claim.

\begin{lem}\label{lem: Hasse poset lemma}
Let $X$ be a poset and suppose that $X=A \sqcup B$. If either $f(A)$ or $f(B)$ is strongly collapsible, then so is $f(X).$
\end{lem}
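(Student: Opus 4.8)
The plan is to reduce this statement about posets to the already-established Corollary \ref{cor: f join} together with Barmak's join criterion, Proposition \ref{prop: Barmakthesis}. The key observation is that the construction $f$ only depends on the poset structure through its Hasse-type diagram, and for a disjoint union of posets $X = A \sqcup B$ there is no comparability between an element of $A$ and an element of $B$. Consequently the directed diagram $\HH(X)$ is precisely the disjoint union $\HH(A) \sqcup \HH(B)$, and a matching of $\HH(X)$ is acyclic if and only if its restriction to each of $\HH(A)$ and $\HH(B)$ is acyclic (a cycle cannot cross between the two components). This is exactly the content needed to invoke the disjoint-union machinery.

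First I would record, either as an immediate remark or by the argument just sketched, that $f(X) \simeq f(\HH(A)) \ast f(\HH(B))$; if one prefers to stay purely at the level of the function $f$, this is Corollary \ref{cor: f join} applied with the posets $A$ and $B$ in place of the Hasse diagrams, using that $\HH(A\sqcup B)=\HH(A)\sqcup\HH(B)$. Then I would apply Proposition \ref{prop: Barmakthesis}: since $f(A) \simeq f(\HH(A))$ and $f(B) \simeq f(\HH(B))$, the hypothesis that one of $f(A)$, $f(B)$ is strongly collapsible gives that one of the two join factors is strongly collapsible, hence the join $f(\HH(A)) \ast f(\HH(B))$ is strongly collapsible, and therefore so is $f(X)$.

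The one genuine subtlety — and the step I expect to require the most care — is the interplay between ``$\simeq$'' and ``strongly collapsible.'' Strong collapsibility is not a homotopy invariant (a contractible complex need not be strongly collapsible), so one must be careful that Corollary \ref{cor: f join} and Remark \ref{rem: general Hasse}, which are stated up to homotopy equivalence $\simeq$, are actually being used in a way that is legitimate here. The clean fix is to check that in the present degenerate-Hasse setting the relevant identifications are in fact equalities (or simplicial isomorphisms) of complexes, not merely homotopy equivalences: $\HH(A \sqcup B)$ literally equals $\HH(A) \sqcup \HH(B)$, and the proof of Proposition \ref{prop: morse union} shows $f(\HH(A)\sqcup\HH(B)) = f(\HH(A)) \ast f(\HH(B))$ on the nose. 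With that equality in hand, Proposition \ref{prop: Barmakthesis} applies directly to conclude $f(X)$ is strongly collapsible, completing the proof.
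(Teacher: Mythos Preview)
Your proof is correct and follows essentially the same underlying idea as the paper's, but the packaging differs. The paper argues directly: since $A$ and $B$ are disjoint, every vertex of $f(A)$ is compatible with every vertex of $f(B)$, so one may perform the strong collapses of $f(A)$ inside $f(X)$ down to a single vertex $v$, after which $v$ is compatible with (hence dominates) everything remaining, making $f(X)$ a cone. You instead observe that $f(X)=f(A)\ast f(B)$ on the nose and then invoke Proposition~\ref{prop: Barmakthesis}; this is cleaner in that it reuses results already established, whereas the paper's argument amounts to reproving the relevant direction of Proposition~\ref{prop: Barmakthesis} in situ.

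Your attention to the $\simeq$ versus equality issue is well placed and is a point the paper does not address explicitly: Corollary~\ref{cor: f join} is stated only up to $\simeq$, which would not by itself transfer strong collapsibility. Your fix---noting that the proof of Proposition~\ref{prop: morse union} actually gives an equality of simplicial complexes, and that $\HH(A\sqcup B)=\HH(A)\sqcup\HH(B)$ literally---is exactly right and closes that gap.
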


\begin{proof}
Without loss of generality, suppose that $f(A)$ is strongly collapsible.  Since $A$ is disjoint from $B$, all of the primitive vectors in $f(A)$ are compatible with all simplices of $f(X)-f(A)$. Hence, after strong collapsing $f(A)$ to some primitive vector $v$, $v$ will be compatible with all simplices of $f(X)-f(A)$, and thus dominates all vertices in $f(X)-f(A)$. Therefore $f(X)$ is also strongly collapsible.
\end{proof}

In other words, if through the addition of leaves to a simplicial complex we encounter a disjoint section of the Hasse diagram whose Morse complex is strongly collapsible, than the original simplicial complex's Morse complex is also strongly collapsible. \\

This lemma provides a potentially convenient way to determine the strong collapsibility of the Morse complex of graphs, by developing a comprehensive collection of such "disjoint sections" that are known to have strongly collapsible Morse complexes. This can be done systematically by beginning with posets of height $2$ (corresponding to graphs) whose bottom layer has $1$ node, $2$ nodes, $3$ nodes, etc. We will show the cases for $1$ and $2$ nodes in the bottom layer:\\

\textbf{Case 1:} $1$ node. The following is the only such connected poset of height $2$ whose Morse complex is strongly collapsible, as its Morse complex is a single point:

$$\begin{tikzpicture}[scale=.8,
    decoration={markings,mark=at position 0.6 with {\arrow{triangle 60}}},
    ]

\node[inner sep=1pt, circle, fill=black] (a) at (0,0) [draw] {};
\node[inner sep=1pt, circle, fill=black] (b) at (0,1) [draw] {};

\draw[-]  (a)--(b) node[midway, below] {};

\end{tikzpicture}$$

If there is more than $1$ node in the second layer of the poset, then its Morse complex will consist entirely of disjoint points, and thus would not be strongly collapsible.\\

\noindent\textbf{Case 2:} $2$ vertices. The following is the only such connected poset of height $2$ whose Morse complex is strongly collapsible:
$$\begin{tikzpicture}[scale=.8,
    decoration={markings,mark=at position 0.6 with {\arrow{triangle 60}}},
    ]

\node[inner sep=1pt, circle, fill=black] (a) at (-0.5,0) [draw] {};
\node[inner sep=1pt, circle, fill=black] (b) at (0.5,0) [draw] {};
\node[inner sep=1pt, circle, fill=black] (c) at (0,1) [draw] {};
\node[inner sep=1pt, circle, fill=black] (d) at (1,1) [draw] {};
\node[inner sep=1pt, circle, fill=black] (e) at (-1,1) [draw] {};

\draw[-]  (a)--(e) node[midway, below] {};
\draw[-]  (a)--(c) node[midway, below] {};
\draw[-]  (b)--(c) node[midway, below] {};
\draw[-]  (b)--(d) node[midway, below] {};

\node[anchor = north ]  at (a) {{$a$}};
\node[anchor = north ]  at (b) {{$b$}};
\node[anchor = south ]  at (c) {{$ab$}};
\node[anchor = south ]  at (d) {{$bd$}};
\node[anchor = south ]  at (e) {{$ac$}};
\end{tikzpicture}$$
We now show that all other posets with 2 vertices in the bottom layer have Morse complexes that are not strongly collapsible. It is easy to verify that a connected poset of height $2$ with $2$ nodes in the bottom layer can only have $1$ node in the second layer that is connected to both nodes in the bottom layer. We now consider the casework on the number of nodes $a$ and $b$ are connected to in the second layer. We have two cases:\\

\textbf{Case 2a:} One of $a$ and $b$ is connected to more than $1$ node in the second layer.
$$\begin{tikzpicture}[scale=.8,
    decoration={markings,mark=at position 0.6 with {\arrow{triangle 60}}},
    ]

\node[inner sep=1pt, circle, fill=black] (a) at (-0.5,0) [draw] {};
\node[inner sep=1pt, circle, fill=black] (b) at (0.5,0) [draw] {};
\node[inner sep=1pt, circle, fill=black] (c) at (0,1) [draw] {};
\node[inner sep=1pt, circle, fill=black] (d) at (1,1) [draw] {};
\node[inner sep=1pt, circle, fill=black] (e) at (-1,1) [draw] {};
\node[inner sep=1pt, circle, fill=black] (a2) at (-2,1) [draw] {};
\node[inner sep=1pt, circle, fill=black] (ak) at (-4,1) [draw] {};
\node at (-3,1) {$\cdots$};

\draw[-]  (a)--(e) node[midway, below] {};
\draw[-]  (a)--(c) node[midway, below] {};
\draw[-]  (b)--(c) node[midway, below] {};
\draw[-]  (b)--(d) node[midway, below] {};
\draw[-]  (a)--(a2) node[midway, below] {};
\draw[-]  (a)--(ak) node[midway, below] {};

\node[anchor = north ]  at (a) {{$a$}};
\node[anchor = north ]  at (b) {{$b$}};
\node[anchor = south ]  at (c) {{$ab$}};
\node[anchor = south ]  at (d) {{$bb_1$}};
\node[anchor = south ]  at (e) {{$aa_1$}};
\node[anchor = south ]  at (a2) {{$aa_2$}};
\node[anchor = south ]  at (ak) {{$aa_k$}};
\end{tikzpicture}$$
The Morse complex of this poset consists of the disjoint nodes $(a, aa_1), (a, aa_2), \dots , (a,aa_k)$ joined to the two disjoint nodes $(b,bb_1)$ and $(b,ab)$, together with $(a,ab)$ connected to $(b,bb_1)$. There is only one pair of dominating vertices, namely $(b,bb_1)$ dominates $(a,ab)$, and it can be easily shown that no other dominating pair exists. Thus the Morse complex of posets of this form are not strongly collapsible.\\

\noindent\textbf{Case 2b:} Both $a$ and $b$ are connected to more than $1$ node in the second layer.
$$\begin{tikzpicture}[scale=.8,
    decoration={markings,mark=at position 0.6 with {\arrow{triangle 60}}},
    ]

\node[inner sep=1pt, circle, fill=black] (a) at (-0.5,0) [draw] {};
\node[inner sep=1pt, circle, fill=black] (b) at (0.5,0) [draw] {};
\node[inner sep=1pt, circle, fill=black] (c) at (0,1) [draw] {};
\node[inner sep=1pt, circle, fill=black] (d) at (1,1) [draw] {};
\node[inner sep=1pt, circle, fill=black] (e) at (-1,1) [draw] {};
\node[inner sep=1pt, circle, fill=black] (a2) at (-2,1) [draw] {};
\node[inner sep=1pt, circle, fill=black] (ak) at (-4,1) [draw] {};
\node at (-3,1) {$\cdots$};
\node[inner sep=1pt, circle, fill=black] (b2) at (2,1) [draw] {};
\node[inner sep=1pt, circle, fill=black] (bk) at (4,1) [draw] {};
\node at (3,1) {$\cdots$};

\draw[-]  (a)--(e) node[midway, below] {};
\draw[-]  (a)--(c) node[midway, below] {};
\draw[-]  (b)--(c) node[midway, below] {};
\draw[-]  (b)--(d) node[midway, below] {};
\draw[-]  (a)--(a2) node[midway, below] {};
\draw[-]  (a)--(ak) node[midway, below] {};
\draw[-]  (b)--(b2) node[midway, below] {};
\draw[-]  (b)--(bk) node[midway, below] {};

\node[anchor = north ]  at (a) {{$a$}};
\node[anchor = north ]  at (b) {{$b$}};
\node[anchor = south ]  at (c) {{$ab$}};
\node[anchor = south ]  at (d) {{$bb_1$}};
\node[anchor = south ]  at (e) {{$aa_1$}};
\node[anchor = south ]  at (a2) {{$aa_2$}};
\node[anchor = south ]  at (ak) {{$aa_k$}};
\node[anchor = south ]  at (b2) {{$bb_2$}};
\node[anchor = south ]  at (bk) {{$bb_k$}};

\end{tikzpicture}$$

The Morse complex of this poset consists of the disjoint points $(a,ab), (a,aa_1),$ $(a,aa_2), \dots , (a,aa_k)$ joined to the disjoint points $(b,ab),(b,bb_1),(b,bb_2), \dots , (b, bb_k)$, together with $(a,ab)$ joined to the disjoint points $(b,bb_1),(b,bb_2), \dots , (b, bb_k)$. Any point connected to a point of the form $(a,aa_i)$ is also connected to the other $k-1$ points of the form $(a,aa_i)$. Since all points of the form $(a,aa_i)$ are disjoint, $(a,aa_i)$ cannot dominate any other point. By symmetry, neither can any point of the form $(b,bb_i)$. By similar reasoning, it can be easily seen that $(a,ab)$ and $(b,ab)$ cannot dominate any other points. Therefore, as there are no dominating vertices in the Morse complex, it is not strongly collapsible.\\

This process can be continued to determine posets with strongly collapsible Morse complexes with greater amounts of vertices in the bottom layer. For example, the following is one such poset for $3$ vertices:
$$\begin{tikzpicture}[scale=.8,
    decoration={markings,mark=at position 0.6 with {\arrow{triangle 60}}},
    ]

\node[inner sep=1pt, circle, fill=black] (a) at (0,0) [draw] {};
\node[inner sep=1pt, circle, fill=black] (b) at (-1,0) [draw] {};
\node[inner sep=1pt, circle, fill=black] (c) at (1,0) [draw] {};
\node[inner sep=1pt, circle, fill=black] (aa) at (0,1) [draw] {};
\node[inner sep=1pt, circle, fill=black] (ac) at (0.5,1) [draw] {};
\node[inner sep=1pt, circle, fill=black] (ab) at (-0.5,1) [draw] {};
\node[inner sep=1pt, circle, fill=black] (cc) at (1.5,1) [draw] {};
\node[inner sep=1pt, circle, fill=black] (bb) at (-1.5,1) [draw] {};

\draw[-]  (a)--(aa) node[midway, below] {};
\draw[-]  (a)--(ac) node[midway, below] {};
\draw[-]  (c)--(ac) node[midway, below] {};
\draw[-]  (b)--(ab) node[midway, below] {};
\draw[-]  (a)--(ab) node[midway, below] {};
\draw[-]  (b)--(bb) node[midway, below] {};
\draw[-]  (c)--(cc) node[midway, below] {};

\end{tikzpicture}$$

Whether this is the only such poset remains open.\\
If sufficiently many strongly collapsible posets of any given number of vertices in the bottom layer are computed, a simple algorithm can be used to determine the strong collapsibility of the Morse complex of any simplicial complex of dimension $1$:

 \begin{algorithm}[H]
  \caption{Strong Collapsibility of Morse complex}\label{alg:Hasse1}
\verb"Input:" Simplicial Complex $K$  \\
\verb"Output:"  \textbf{True} if $\M(K)$ is strongly collapsible, $\textbf{False}$ if not

\begin{enumerate}\itemsep1pt \parskip0pt
\parsep0pt
\item[1] Compute the Hasse Diagram $\HH(K)$
\item[2] \textbf{for each} subposet $p$ of $\HH(K)$ of height $2$:
\item[3] \quad \textbf{if} $f(p)$ is known to be strongly collapsible:
\item[4] \quad\quad \textbf{return True}
\item[5] \quad \textbf{end if}
\item[6]\textbf{end for}
\item[7] \textbf{return False}
  \end{enumerate}
\end{algorithm}

\end{document}